\let\oldalgorithmic\algorithmic 
\renewcommand\algorithmic{\ttfamily\fontseries{l}\selectfont\oldalgorithmic}
\newtheorem{theorem}{\textbf{Theorem}}
\newtheorem{lemma}{\textbf{Lemma}}
\newtheorem{corollary}{\textbf{Corollary}}
\newtheorem{remark}{\textbf{Remark}}
\newtheorem{assumption}{\textbf{Assumption}}
\renewenvironment{proof}[1][\textbf{Proof}]{{\noindent\it #1:}\enspace}{\hfill $\blacksquare$\par}
\newenvironment{rproof}[1][\textbf{Proof}]{{\noindent\it #1:}\enspace}{\hfill \par}
\newcommand{\mycaptionof}[2]{\captionof{#1}{#2}}
\newcommand{\NOTE}[1]{\Ensure \hspace{-0.15in} \textbf{\textit{#1}}}
\def\x{{\mathbf x}}
\def\z{{\mathbf z}}
\def\y{{\mathbf y}}
\def\blambda{{\boldsymbol \lambda}}
\def\bphi{{\boldsymbol \phi}}
\def\btheta{{\boldsymbol \theta}}
\def\balpha{{\boldsymbol \alpha}}
\def\bbeta{{\boldsymbol \beta}}
\def\s{{\mathbf s}}
\def\g{{\mathbf g}}
\def\R{{\mathbb R}}
\def\E{{\mathbb E}}
\def\e{{\mathbf e}}
\def\v{{\mathbf v}}
\DeclareMathOperator*{\argmin}{argmin}
\title{\LARGE \bf
Communication-efficient distributed optimization with adaptability to system heterogeneity
}
\author{Ziyi Yu and Nikolaos M. Freris
\thanks{The authors are with the School of Computer Science, University of Science and Technology of China, Hefei, China \newline
        {\tt\small Emails:  yuziyi@mail.ustc.edu.cn, nfr@ustc.edu.cn}\newline
        This research was supported by the USTC Research Department under grant WK2150110025. Correspondence to N. Freris.
        }%
}
\begin{document}

\maketitle
\thispagestyle{empty}
\pagestyle{empty}

\begin{abstract}
We consider the setting of agents cooperatively minimizing the sum of local objectives plus a regularizer on a graph. 
This paper proposes a primal-dual method in consideration of three distinctive attributes of real-life multi-agent systems, namely: (i)~expensive communication, (ii)~lack of synchronization, and (iii)~system heterogeneity.  
In specific, we propose a distributed asynchronous algorithm with minimal communication cost, in which users commit variable amounts of local work on their respective sub-problems. 
We illustrate this both theoretically and experimentally in the machine learning setting, where the agents hold private data and use a stochastic Newton method as the local solver. 
Under standard assumptions on Lipschitz continuous gradients and strong convexity, our analysis establishes linear convergence in expectation and characterizes the dependency of the rate on the number of local iterations. 
We proceed a step further to propose a simple means for tuning agents’ hyperparameters locally, so as to adjust to heterogeneity and accelerate the overall convergence.
Last, we validate our proposed method on a benchmark machine learning dataset to illustrate the merits in terms of computation, communication, and run-time saving as well as adaptability to heterogeneity.
\end{abstract}

\section{INTRODUCTION}
\label{sec:intro}

The resurgence of distributed optimization \cite{YANG2019278} is underscored by the vast numbers of smart devices (phones, sensors, etc.) that collect massive volumes of data in a distributed fashion.
It has concise advantages compared to its centralized counterpart in terms of balancing computation and communication costs as well as minimizing delays, and has thus found applications in a broad range of fields, such as in machine learning and data mining \cite{li2022HIPPO, vlachos2015compressive}, signal processing \cite{sopasakis2016accelerated, FedZO}, wireless sensor networks \cite{freris2010fundamentals}, and control \cite{Conte2012Computational}.

The archetypal problem that we consider is to:
\begin{mini}
    {\hat \x\in \R^d}{\frac{1}{n}\sum_{i=1}^n f_i(\hat \x) + r(\hat \x),}
    {\label{eqn:DistObjDef}}{}
\end{mini}
where $n$ is the number of agents in the network, $f_i(\cdot): \R^d \rightarrow \R$ is a strongly convex and smooth cost function pertaining to agent $i \in [n]$ ($[n]:=\{1,\hdots,n\})$, and $r(\cdot)$ is a convex but possibly non-smooth function that serves to impose regularity on the solution.
We further pose additional structure on the cost function as motivated by the machine learning framework.
In specific, the $i$-th agent possesses a batch of data samples $\big\{\s_i^j\big\}_{j=1}^{D_i}$ with volume $D_i$, whence:
\begin{equation}\label{eqn:localloss}
    f_i(\hat \x) := \frac{1}{D_i} \sum_{j=1}^{D_i} l(\s_i^j;\hat \x),
\end{equation}
where $l(\cdot;\hat \x)$ is the loss function associated with one data sample for a model parameter $\hat \x$.

There has been extensive work on the \emph{consensus} framework, especially first-order methods \cite{nedic2009distributed, shi2015extra, shi2015proximal, qu2017harnessing, nedic2017achieving, alghunaim2019P2D2}, which are prone to slow convergence in ill-conditioned problems.
Second-order information can be used to remedy this \cite{wei2013distributed, mokhtari2016network, crane2019dingo}, nonetheless, this comes with caveats in terms of extensive message-passing and stringent synchronization requirements.

Our proposed solution falls under the taxonomy of primal-dual methods, in particular, the celebrated \emph{Alternating Direction Method of Multipliers} (ADMM) \cite{boyd2011distributed} which has given rise to a multitude of distributed methods \cite{wei2012DADMM, shi2014linear, ling2015dlm, chang2014multi, Chang2016asyn} that feature fast convergence and robustness to hyperparameter selection.
The computationally challenging component of ADMM lies in solving optimization sub-problems associated with the local objectives. 
To this end, there has been extensive work on inexact methods, that typically involves a gradient step for the local problems \cite{ling2015dlm, chang2014multi}. 
In contrast, the use of second-order information for obtaining inexact solutions has been unattractive as it requires inner communication loops to approximate the Hessian \cite{Mokhtari2016DADMM, Eisen2019PD}. 
This issue was resolved by the DRUID framework \cite{li2022DRUID}, which re-formulated the problem by introducing intermediate variables, thus allowing for a decomposition that supports (quasi-)Newton local solvers without an increase in communication costs. 
Nevertheless, the methods in \cite{li2022DRUID} only considered a single local iteration and deterministic local solvers, while the analysis heavily relies on this setting; they are thus unsuitable to capture computational heterogeneity in a real system. 

The proposed solution in this paper is motivated by three main features/requirements in large-scale smart systems, namely:
\begin{enumerate}
    \item \emph{communication is costly} (in view of battery drainage, limited bandwidths, and stringent delay considerations),
    \item \emph{synchronous methods are unattractive} (due to agent unavailability caused by varying operating conditions as well as the difficulty of the synchronization problem \cite{Freris2011Asy}), and 
    \item \emph{heterogeneity in computing capabilities} (this allows for variable work loads to be committed based on the device hardware and battery levels). 
\end{enumerate}
We propose a distributed asynchronous communication-efficient protocol tailored for variability to system heterogeneity. 
The contributions are summarized as follows.

\noindent \textbf{Contributions:} 
\begin{enumerate}
    \item The proposed method, DRUID-VL, allows an arbitrary subset of agents to participate in each round and features minimal communication costs (a single broadcasting step of the local variable to neighboring agents).
    \item It specifically addresses heterogeneity in terms of local work. We illustrate this in the machine learning setting by applying an agent-dependent number of stochastic Newton steps for the local sub-problem. 
    \item  We establish linear convergence in expectation (for arbitrary levels of heterogeneity) and characterize the convergence rate (\textit{Theorem} \ref{thm:main}). In particular, our analysis captures the effect of local work on the convergence of the algorithm and allows for a simple method to tune hyperparameters locally to further accelerate the convergence.
    \item We experimentally demonstrate on a real-life dataset the communication and computation benefits of our method vis-à-vis prior art, and also study the algorithm's gains by adapting to heterogeneity.
\end{enumerate}

\section{PRELIMINARIES}
\label{sec:Prel}

\subsection{Problem Formulation}
\label{subsec:reform}
The network topology is captured by an undirected graph $\mathcal{G} = (\mathcal{V}, \mathcal{E})$, where $\mathcal{V} = \left\{1, \hdots, n\right\}$ is the vertex set and $\mathcal{E} \subseteq \mathcal{V} \times \mathcal{V}$ is the edge set, i.e. $(i, j) \in \mathcal{E}$ if agent $i$ can exchange packets with agent $j$ ($\mathcal{N}_i:=\{j: (i,j)\in\mathcal{E}\}$ denotes the neighborhood of $i$).
Provided that $\mathcal{G}$ is connected, \eqref{eqn:DistObjDef} can be directly recast in the following consensus setting:
\begin{mini}
    {\x_i, \btheta, \z_{ij}}{\frac{1}{n}\sum_{i=1}^n f_i(\x_i) + r(\btheta),}
    {\label{eqn:DistObjConsensusEqua}}{}
    \addConstraint{\x_i = \z_{ij} = \x_j, }{\quad}{\forall (i,j) \in \mathcal{E}}
    \addConstraint{\x_q = \btheta, \  }{\ }{\text{for one arbitrary } q \in [n],}
\end{mini}
where we introduce $\theta$ to separate the smooth and non-smooth functions in \eqref{eqn:DistObjDef}, and select (arbitrarily) the $q$-th agent to enforce the equality constraint as $\x_q = \btheta$.
By defining the source and destination matrices $\hat A_s, \hat A_d \in \R^{ |\mathcal{E}|\times n}$:
$[\hat A_s]_{ki} = [\hat A_d]_{kj} = 1$ if $(i,j)$ is the $k$-th element in $\mathcal{E}$ and all other entries are zero, stacking $\x_i, \z_{ij} \in \R^{d}$ into column vectors $\x \in \R^{nd}, \z \in \R^{ |\mathcal{E}|d}$, respectively,
denoting $f(\x) = \frac{1}{n}\sum_{i=1}^n f_i(\x_i)$, and defining $S := \mathbf{e}_q \otimes I_d \in \R^{nd \times d}$ ($\mathbf{e}_q \in \R^{n}$ has one at the $q$-th entry and zero elsewhere), we obtain a compact representation as:
\begin{mini}
    {\x, \z, \btheta }{f(\x) + r(\btheta),}
    {\label{eqn:DistObjConsensusMat}}{}
    \addConstraint{A\x = \begin{bmatrix} \hat A_s \otimes I_d\\ \hat A_d \otimes I_d \end{bmatrix} \x}{= \begin{bmatrix} I_{|\mathcal{E}|d}\\ I_{|\mathcal{E}|d} \end{bmatrix}\z = B\z}{}
    \addConstraint{S^{\top}\x }{= \btheta.}{}
\end{mini}
We further make the definitions $\hat E_u = \hat A_s + \hat A_d, \hat E_s = \hat A_s - \hat A_d \in \R^{ |\mathcal{E}|\times n}$ (unsigned and signed graph incidence matrix), $\hat L_u = {\hat E_u}^{\top}{\hat E_u}, \hat L_s = {\hat E_s}^{\top}{\hat E_s} \in \R^{ n\times n}$ (unsigned and signed graph Laplacian matrix), and $\hat D = \frac{1}{2}(\hat L_u + \hat L_s) \in \R^{ n\times n}$ (graph degree matrix with entries $D_{ii} = |\mathcal{N}_i|$).
Their block extensions are given by $E_u = \hat E_u \otimes I_d$, $E_s = \hat E_s \otimes I_d$, $L_u = \hat L_u \otimes I_d$, $L_s = \hat L_s \otimes I_d$, and $D = \hat D \otimes I_d$ (where $\otimes$ denotes the Kronecker product and $I_d$ the identity matrix). 

\begin{remark}
It is worth noting that the use of the intermediate variables $\{z_{ij}\}$ is essential for each agent to estimate curvature without information exchange in the neighborhood, leading to a communication-efficient protocol. 
For more details, we refer the reader to \cite[\textit{Remark} 2]{li2022DRUID}.
\end{remark}


\subsection{ADMM Formulation}
\label{subsec:ADMM}
The augmented Lagrangian (AL) for \eqref{eqn:DistObjConsensusMat} is
\begin{align}
        &\mathcal{L}(\x, \btheta, \z; \y, \blambda) :=  f(\x) + r(\btheta) + \y^{\top}(A\x - B\z) \label{eqn:DistObjConsensusMatAL}\\
        & + \blambda^{\top}(S^{\top}\x - \btheta) + \frac{\mu_{z}}{2}\|A\x - B\z\|^2 + \frac{\mu_{\theta}}{2}\|S^{\top}\x - \btheta\|^2. \notag
\end{align}
We apply (3-step) ADMM to \eqref{eqn:DistObjConsensusMatAL}:
\begin{subequations}
    \label{eqn:ADMM}
    \begin{align}
        \x^{t+1} &= \argmin_{\x} \mathcal{L}_{\Gamma}(\x, \btheta^t, \z^t; \y^t, \blambda^t), \label{eqn:ADMM_x}\\
        \btheta^{t+1} &= \argmin_{\btheta} \mathcal{L}(\x^{t+1}, \btheta, \z^t; \y^t, \blambda^t), \label{eqn:ADMM_theta}\\
        \z^{t+1} &= \argmin_{\z} \mathcal{L}(\x^{t+1}, \btheta^{t+1}, \z; \y^t, \blambda^t), \label{eqn:ADMM_z}\\
        \y^{t+1} &= \y^t + \mu_{z}(A \x^{t+1} - B \z^{t+1}), \label{eqn:ADMM_y}\\
        \blambda^{t+1} &= \blambda^t + \mu_{\theta}(S^{\top}\x^{t+1} - \btheta^{t+1}). \label{eqn:ADMM_lambda}
    \end{align}
\end{subequations}
Note that minimization in \eqref{eqn:ADMM_x} is carried over the perturbed AL:
\begin{equation}\label{eqn:perturbed_AL}
    \mathcal{L}_{\Gamma}(\x, \btheta^t, \z^t; \y^t, \blambda^t):= \mathcal{L}(\x, \btheta^t, \z^t; \y^t, \blambda^t) + \frac{1}{2}\|\x - \x^t\|_{\Gamma}^2, 
\end{equation}
where we define the coefficient matrix $\Gamma := \textbf{diag}\left(\epsilon_1,\hdots,\epsilon_n\right)\otimes I_d \in \R^{nd \times nd}$, and $\epsilon_i > 0, \forall i \in [n]$.
This serves to add robustness by means of \emph{personalized} hyperparameters ($\epsilon_i$ for each agent $i$).
The main computational burden resides in the sub-optimization problem \eqref{eqn:ADMM_x}, for which obtaining an exact solution would be  time-consuming \cite{shi2014linear};
to remedy this issue, vanilla DRUID \cite{li2022DRUID} adopts a single step of the same deterministic algorithm (GD, Newton, or BFGS) across all agents to solve \eqref{eqn:ADMM_x} inexactly. 

Our work serves to bridge the gap between distributed inexact \cite{li2022DRUID} and exact ADMM \cite{shi2014linear} by deviating from single-step updates and permitting a variable, agent-dependent number of updates ($E_i$ for agent $i$). 
This is necessary in order to further exploit local computation resources to attain a better trade-off between computation (more work) and communication (fewer global rounds).
For the local solver, we opt for a stochastic Newton method \cite{roosta2019sub} that employs sub-sampled Hessian inverse and stochastic gradient.
This is in view of the special structure as in \eqref{eqn:localloss}, for which stochastic methods have proliferated in problems with \emph{Big Data} (these methods operate by sampling mini-batches of data, and use tools such as gradient tracking and variance reduction to recover linear convergence \cite{roosta2019sub,johnson2013SVRG, Agarwal2017LiSSA}).

The following extends \cite[\textit{Lemma} 1]{li2022DRUID} to the case of personalized hyperparameters and variable local iterations of stochastic Newton, and is key for an efficient implementation of \eqref{eqn:ADMM} under agent-specific stochastic Newton steps for sub-problem \eqref{eqn:ADMM_x}.
Note that below we use a generic local iteration number $E$ for notational simplification but would like to emphasize that the implementation of the algorithm supports freely choosing the local work as $E_i$.
\begin{lemma}\label{lem:centralizeUpdates}
Consider \eqref{eqn:ADMM} with \eqref{eqn:ADMM_x} replaced by $E$ steps of stochastic Newton ($\hat \nabla,\hat{\nabla}^2$ denote stochastic gradient/Hessian). 
Let $\y^t = [\balpha^t;\bbeta^t]$, $\balpha^t,\bbeta^t \in \R^{|\mathcal{E}|d}$.
If $\y^0$ and $\z^0$ are initialized so that $\balpha^0 + \bbeta^0 = 0$ and $\z^0 = \frac{1}{2}E_u \x^0$, then 
$\balpha^t + \bbeta^t = 0$ and $\z^t = \frac{1}{2}E_u \x^t$ for all $t \ge 0$.
Moreover, defining $\bphi^t := E_s^{\top} \balpha^t$, $\x^{t,0} := \x^{t}$ and $\x^{t,E} := \x^{t+1}$, the updates can be written as:
\begin{subequations}\label{eqn:matrix_ADMM_update}
\begin{align}
    \x^{t,e} &= \x^{t,e-1} - \Big(\hat \nabla^2 f(\x^{t,e-1}) + \mu_z D + \mu_{\theta}SS^{\top}+\Gamma\Big)^{-1} \notag\\
    &\Big[\hat \nabla f(\x^{t,e-1})+\bphi^t+S\blambda^t+ \frac{1}{2} \mu_z L_s \x^{t} + \Gamma (\x^{t,e-1} - \x^{t})  \notag\\
    & +\mu_{\theta}S(S^{\top}\x^{t}-\btheta^t)\Big], \ e = 1, \hdots, E,\label{eqn:matrix_x}\\
    \btheta^{t+1} &= \textbf{prox}_{r/\mu_{\theta}}(S^{\top}\x^{t+1} + \frac{1}{\mu_{\theta} }\blambda^t),\label{eqn:matrix_theta}\\
    \bphi^{t+1} &= \bphi^{t} + \frac{1}{2} \mu_z L_s \x^{t+1},\label{eqn:matrix_phi}\\
    \blambda^{t+1} &= \blambda^t + \mu_{\theta} (S^{\top} \x^{t+1} - \btheta^{t+1}), \label{eqn:matrix_lambda}
\end{align}
\end{subequations}
where $\textbf{prox}$ denotes the proximal mapping \cite{parikh2014proximal}.
\end{lemma}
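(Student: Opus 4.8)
\begin{rproof}[\textit{Proof sketch}]
The plan is to split the argument into two parts: first an induction on $t$ establishing the two invariants $\balpha^t+\bbeta^t=0$ and $\z^t=\tfrac12 E_u\x^t$, and then a direct substitution of these invariants into the five ADMM updates \eqref{eqn:ADMM} to recover \eqref{eqn:matrix_ADMM_update}. The only ingredients needed are the elementary incidence/Laplacian identities $B^\top B = 2I$, $B^\top A = E_u$ (hence $A^\top B = E_u^\top$), $A^\top A = \tfrac12(L_u+L_s)=D$ (which follows from $E_u^\top E_u + E_s^\top E_s = 2A^\top A$ together with $\hat D = \tfrac12(\hat L_u+\hat L_s)$), $E_u^\top E_u = L_u$ and $E_s^\top E_s = L_s$; note also that the sub-sampled derivatives $\hat\nabla f,\hat\nabla^2 f$ enter the algebra exactly as $\nabla f,\nabla^2 f$ would, so moving from an exact to a stochastic local solver costs nothing in this lemma.

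For the invariants I would first read off \eqref{eqn:ADMM_z}: setting the $\z$-gradient of $\mathcal L$ to zero and using $B^\top B=2I$, $B^\top A = E_u$, and $B^\top\y^t = \balpha^t+\bbeta^t$ gives $\z^{t+1} = \tfrac12 E_u\x^{t+1} + \tfrac{1}{2\mu_z}(\balpha^t+\bbeta^t)$. Splitting \eqref{eqn:ADMM_y} into its $\balpha$- and $\bbeta$-blocks and adding them yields $\balpha^{t+1}+\bbeta^{t+1} = (\balpha^t+\bbeta^t) + \mu_z(E_u\x^{t+1}-2\z^{t+1})$, and substituting the previous expression for $\z^{t+1}$ makes the right-hand side collapse to $0$. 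Hence $\balpha^t+\bbeta^t = 0$ for every $t\ge 1$ unconditionally, while $\balpha^0+\bbeta^0=0$ is the hypothesis; feeding this back into the formula for $\z^{t+1}$ gives $\z^t = \tfrac12 E_u\x^t$ for $t\ge 1$, and $\z^0=\tfrac12 E_u\x^0$ is again the hypothesis.

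For the update formulas, substitute $\bbeta^t=-\balpha^t$ and $\z^t=\tfrac12 E_u\x^t$ throughout. Then \eqref{eqn:ADMM_lambda} is literally \eqref{eqn:matrix_lambda}; collecting the $\btheta$-dependent terms of $\mathcal L$ and completing the square turns \eqref{eqn:ADMM_theta} into $\argmin_{\btheta} r(\btheta)+\tfrac{\mu_\theta}{2}\|\btheta-(S^\top\x^{t+1}+\tfrac{1}{\mu_\theta}\blambda^t)\|^2$, which is \eqref{eqn:matrix_theta} by the definition of $\textbf{prox}$; and multiplying the $\balpha$-block of \eqref{eqn:ADMM_y} by $E_s^\top$, inserting $\z^{t+1}=\tfrac12 E_u\x^{t+1}$, and using $E_s^\top A_s - \tfrac12 E_s^\top E_u = \tfrac12 E_s^\top E_s = \tfrac12 L_s$ gives \eqref{eqn:matrix_phi} for $\bphi^{t+1}=E_s^\top\balpha^{t+1}$. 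For \eqref{eqn:ADMM_x} I would write out the $E$ successive stochastic Newton steps that define the inexact solve: the prescribed Hessian surrogate is $\hat\nabla^2 f(\x^{t,e-1})+\mu_z A^\top A+\mu_\theta SS^\top+\Gamma = \hat\nabla^2 f(\x^{t,e-1})+\mu_z D+\mu_\theta SS^\top+\Gamma$, while in the gradient the term $A^\top\y^t$ becomes $E_s^\top\balpha^t=\bphi^t$, the $\mu_z$-penalty contribution $\mu_z A^\top(A\x^t - B\z^t)=\mu_z(D\x^t-\tfrac12 L_u\x^t)$ collapses to $\tfrac12\mu_z L_s\x^t$ via $D=\tfrac12(L_u+L_s)$, the $\mu_\theta$-penalty contributes $\mu_\theta S(S^\top\x^t-\btheta^t)$, and only the proximal term carries the running increment $\Gamma(\x^{t,e-1}-\x^t)$; this is exactly \eqref{eqn:matrix_x}.

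I expect the bookkeeping in the $\x$-update to be the main obstacle: one must carefully separate the quantities \emph{frozen} at the outer iterate $\x^t$ (the two quadratic penalties, hence the $\tfrac12\mu_z L_s\x^t$ and $\mu_\theta S(S^\top\x^t-\btheta^t)$ terms) from those evaluated at the \emph{running} inner iterate $\x^{t,e-1}$ (the loss gradient/Hessian and the proximal increment $\Gamma(\x^{t,e-1}-\x^t)$) --- a distinction that is vacuous in the single-iteration analysis of \cite{li2022DRUID}, where $\x^{t,e-1}=\x^t$ annihilates every such increment --- and one must get the orientation-dependent incidence identities ($A^\top A = D$ and $E_s^\top A_s - \tfrac12 E_s^\top E_u = \tfrac12 L_s$) right. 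The remainder is routine Kronecker and linear algebra.
\end{rproof}
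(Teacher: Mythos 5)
Your proposal is correct and follows essentially the same route as the paper's own (much terser) proof: the invariants $\balpha^t+\bbeta^t=0$ and $\z^t=\tfrac12 E_u\x^t$ are obtained from the $\z$- and $\y$-updates exactly as in \cite[\textit{Lemma} 1]{li2022DRUID} (to which the paper simply defers), and \eqref{eqn:matrix_ADMM_update} then follows by substituting these invariants and the incidence/Laplacian identities ($A^\top A=D$, $E_s^\top A_s-\tfrac12E_s^\top E_u=\tfrac12 L_s$, completion of squares for \eqref{eqn:matrix_theta}) into \eqref{eqn:ADMM}. Your explicit bookkeeping of which penalty-gradient terms are frozen at $\x^t$ versus carried at the running inner iterate $\x^{t,e-1}$ in \eqref{eqn:matrix_x} is consistent with the paper's stated update and with the implemented gradient \eqref{eqn:sto_g_H}, and is in fact spelled out more carefully than in the paper's proof.
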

\begin{rproof}
    See Appendix.
\end{rproof}


\begin{algorithm}[htbp]
    \begin{algorithmic}[1]
    \Statex \textbf{Input:} global rounds $T$, work loads $E_i$
        \For{global round $t = 1, \hdots, T$}
            \For{each active agent $i$ \textbf{in parallel}}
\NOTE{Primal update:} 
                \State $\x_i^{t, 0} \leftarrow \x_i^{t}$
                \For{
                $e = 1, \hdots, E_i$}
                    \State sample batches $b_g$ and $b_H$
                    \State compute $\g_{i,b_g}^{t,e-1},H_{i,b_H}^{t,e-1}$ as in \eqref{eqn:sto_g_H}
                    \State solve $H_{i,b_H}^{t,e-1} \mathbf{d}_{i}^{t,e-1} = \g_{i,b_g}^{t,e-1}$
                    \State $\x_i^{t, e} \leftarrow \x_i^{t,e-1} - \mathbf{d}_{i}^{t,e-1}$
                \EndFor
                \State $\x_i^{t+1} \leftarrow \x_i^{t, E_i}$
\NOTE{Communication:}
                \State broadcast $\x_i^{t+1}$ to neighbors
\NOTE{Dual update:}
                \State  $\bphi_i^{t+1} \leftarrow \bphi_i^t + \frac{\mu_{z}}{2}\sum_{j\in \mathcal{N}_i}(\x_i^{t+1} - \x_j^{t+1})$
\NOTE{Updates pertaining to the regularizer:}
                \If{$i=q$}
                    \State $\btheta^{t+1} \leftarrow \textbf{prox}_{r/\mu_{\theta}}(\x_{q}^{t+1} + \frac{1}{\mu_{\theta} }\blambda^t)$
                    \State $\blambda^{t+1} \leftarrow \blambda^{t} + \mu_{\theta}(\x_{q}^{t+1} - \btheta^{t+1})$ 
                \EndIf

            \EndFor
            
        \EndFor
    \end{algorithmic}
\mycaptionof{algorithm}{\texttt{DRUID-VL}}\label{alg:NoName}
\end{algorithm}

\section{ALGORITHM}
\label{sec:algorithm}
The updates in \eqref{eqn:matrix_ADMM_update} can be directly written as a distributed protocol \eqref{eqn:dist_ADMM}, that is amenable to implementation with solely \emph{agent-based} variables. 
thIn specific, agent $i$ holds $(\x_i, \bphi_i)$ (the $q$-th agent associated with the regularizer $r(\cdot)$ additionally holds $(\btheta,\blambda)$) which are updated as: 
\begin{subequations}
    \label{eqn:dist_ADMM}
    \begin{align}
        \x_i^{t,e} &=  \x_i^{t,e-1} - \mathbf{d}_{i}^{t,e-1}, \ e = 1, \hdots, E_i, \label{eqn:dist_ADMM_x}\\
        \btheta^{t+1} &= \textbf{prox}_{r/\mu_{\theta}}(\x_{q}^{t+1} + \frac{1}{\mu_{\theta} }\blambda^t), \label{eqn:dist_ADMM_theta}\\
        \bphi_i^{t+1} &= \bphi_i^t + \frac{\mu_{z}}{2}\sum_{j\in \mathcal{N}_i}(\x_i^{t+1} - \x_j^{t+1}), \label{eqn:dist_ADMM_phi}\\
        \blambda^{t+1} &= \blambda^{t} + \mu_{\theta}(\x_{q}^{t+1} - \btheta^{t+1}). \label{eqn:dist_ADMM_lambda}
    \end{align}
\end{subequations}
Agent $i$ conducts $E_i$ local iterations as in \eqref{eqn:dist_ADMM_x}, where $\mathbf{d}_{i}^{t,e-1}$ is an approximated Newton step that is obtained as follows. 
In local iteration $e$, we sample mini-batches $b_g$ and $b_H$ from the local dataset of the $i$-th agent $\big\{\s_i^j\big\}_{j=1}^{D_i}$.
We then compute stochastic gradient $\g_{i,b_g}^{t,e-1}$ and sub-sampled Hessian $H^{t,e-1}_{i,b_H}$ for the local sub-problem with respect to $\x_i^{t,e-1}$ as follows: 
\begin{align}
    \g_{i,b_g}^{t,e-1} :=& \frac{1}{|b_g|}\sum_{j \in b_g}\nabla l(\s_i^j;\x_i^{t,e-1}) + \delta_{iq} \mu_{\theta}\left(\x_i^{t} - \btheta^t +\mu_{\theta}^{-1}\blambda^t\right) \notag \\
    +& \bphi_i^t + \frac{\mu_{z}}{2}\sum_{j \in \mathcal{N}_i}\left(\x_i^{t} - \x_j^t\right) + \epsilon_i\left(\x_i^{t,e-1} - \x_i^{t}\right), \label{eqn:sto_g_H}\\
    H^{t,e-1}_{i,b_H}:=&\frac{1}{|b_H|}\sum_{j \in b_H}\nabla^2 l(\s_i^j;\x_i^{t,e-1})+ (\mu_{z} |\mathcal{N}_i| + \delta_{iq} \mu_{\theta} + \epsilon_i)I_d, \notag
\end{align}
where $\delta_{iq} = 1$ if $i = q$ and $0$ otherwise.
By solving the linear system $H_{i,b_H}^{t,e-1} \mathbf{d}_{i}^{t,e-1} = \g_{i,b_g}^{t,e-1}$, we obtain the desired stochastic Newton step $\mathbf{d}_{i}^{t,e-1}$.

The details of our proposed method, DRUID-VL (VL: variable loads), are given in Algorithm \ref{alg:NoName}.
It admits asynchronous implementation by allowing variable number of participating agents in each global round (step 2). 
Each active agent executes four stages sequentially: 
(i) \emph{Primal update} (steps 3-10, using $E_i$ iterations of stochastic Newton), 
(ii) \emph{Communication} (broadcasting of the local variable $\x_i$ as in step 11), 
(iii) \emph{Dual update} (step 12) and 
(iv) \emph{Updates pertaining to the regularizer}  (steps 13-16, only for the $q-$th agent).

\section{CONVERGENCE ANALYSIS}
\label{sec:analysis}
We proceed to establish our main convergence theorem (\textit{Theorem} \ref{thm:main}) under the following assumptions.
The proofs of all theoretical results are deferred to the appendix.
\begin{assumption}
\label{assump:graph_connected}
The graph $\mathcal{G}$ is connected.
\end{assumption}
\begin{assumption}
\label{assump:convexity_continuous}
The mini-batches $\frac{1}{|b|}\sum_{j \in b}l(\s_i^j; \cdot)$ and the regularizer function $r(\cdot)$ satisfy the following conditions:

(i) For each agent $i\in[n]$, $\frac{1}{|b|}\sum_{j \in b}l(\s_i^j; \cdot)$ are twice continuously differentiable, $m_f$-strongly convex with $M_f$-Lipschitz continuous gradient, i.e., $\forall i \in [n], \x_i \in \R^d$,
\begin{equation}
    m_f I_d \preceq \frac{1}{|b|}\sum_{j \in b} \nabla^2 l(\s_i^j; \x_i) \preceq M_f I_d,
\end{equation}
where $0 < m_f \le M_f < \infty$.

(ii) The Hessians of $\frac{1}{|b|}\sum_{j \in b}l(\s_i^j; \cdot)$ are Lipschitz continuous with constant $L$, i.e., $\forall i \in [n], \ \x_i,\y_i \in \R^d$,
\begin{equation}
    \bigg\| \frac{1}{|b|}\sum_{j \in b} \nabla^2 l(\s_i^j; \x_i) - \frac{1}{|b|}\sum_{j \in b} \nabla^2 l(\s_i^j; \y_i) \bigg\| \le L \|\x_i - \y_i\|.
\end{equation}

(iii) $r(\cdot):\R^d \rightarrow \R$ is proper, closed, and convex, i.e., $\forall \x, \y \in \R^d$,
\begin{equation}
    (\x-\y)^{\top}(\partial r(\x) - \partial r(\y)) \ge 0,
\end{equation}
where the inequality is meant for arbitrary elements in the subdifferential.
\end{assumption}
\begin{assumption}\label{assump:participation}
The agent participation over rounds is independent with probability $p_i$ for agent $i$ and $p_{\min} := \min_{i\in[n]} p_i > 0$. 
\end{assumption}

In the following (\textit{Lemma}s \ref{lem:NewtonConverge}, \ref{lem:multiple}, and \textit{Theorem} \ref{thm:main}), expectation $\E$ denotes conditional expectation upon past mini-batch selections and agent activations (assuming independence of the two
processes).
It is taken over the mini-batch selection at the current step (\textit{Lemma}s \ref{lem:NewtonConverge}, \ref{lem:multiple}) and the user activation (\textit{Theorem} \ref{thm:main}).

The following lemma is a direct consequence of the analysis of stochastic Newton method \cite[\textit{Theorem} 11]{roosta2019sub} applied to our setting \eqref{eqn:dist_ADMM_x}.
\begin{lemma}\label{lem:NewtonConverge}
    Let $\x^{t,\star}$ be solution to \eqref{eqn:ADMM_x}.
    For sufficiently large batch sizes $|b_g|$ and $|b_H|$, there exists $c_i \in (0,1)$ for $i \in [n]$.
    For $C := \textbf{diag}\left(c_1^{E_1}, \hdots, c_n^{E_n}\right)\otimes I_{nd} \in \R^{nd\times nd}$, it holds that 
    \begin{equation}\label{eqn:NewtonConverge}
        \E\Big[\|\x^{t,E}-\x^{t,\star}\|^2\Big] \le \|\x^{t,0} - \x^{t,\star}\|^2_{C}.
    \end{equation}
\end{lemma}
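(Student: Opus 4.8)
The plan is to reduce \eqref{eqn:NewtonConverge} to a one-step, per-agent contraction of the sub-sampled Newton iteration, and then compound it over the $E_i$ local rounds. The key structural observation is that the $\x$-subproblem \eqref{eqn:ADMM_x}---minimization of the perturbed augmented Lagrangian $\mathcal{L}_{\Gamma}(\cdot,\btheta^t,\z^t;\y^t,\blambda^t)$ in \eqref{eqn:perturbed_AL}---is \emph{block-separable} across agents. Indeed, with $A^{\top}A=D$, $A^{\top}B=E_u^{\top}$, $SS^{\top}=(\mathbf{e}_q\mathbf{e}_q^{\top})\otimes I_d$, and $\Gamma$ diagonal, every $\x$-dependent term of \eqref{eqn:DistObjConsensusMatAL}--\eqref{eqn:perturbed_AL} is either affine in $\x$ or a sum of per-block quadratics proportional to $\|\x_i\|^2$ (the only cross-block couplings, through $L_u$ and $L_s$, multiply the \emph{fixed} vectors $\x^t,\z^t$ and hence contribute only affine terms). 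Therefore $\x^{t,\star}=(\x_1^{t,\star};\dots;\x_n^{t,\star})$, where $\x_i^{t,\star}$ solves a local problem with objective $\tfrac{1}{n}f_i(\cdot)$ plus precisely the quadratic/linear terms that produce the stochastic gradient and Hessian of \eqref{eqn:sto_g_H}; equivalently, blockwise the update \eqref{eqn:matrix_x} is a sub-sampled Newton step for that local problem, i.e.\ exactly the setting of \cite[\textit{Theorem} 11]{roosta2019sub}.

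Next I would verify the hypotheses of that theorem for each local problem. By Assumption \ref{assump:convexity_continuous}(i), adding the constant $(\mu_z|\mathcal{N}_i|+\delta_{iq}\mu_{\theta}+\epsilon_i)I_d$ to the sub-sampled local Hessian keeps it between $(\tfrac{m_f}{n}+\epsilon_i)I_d$ and $(\tfrac{M_f}{n}+\mu_z|\mathcal{N}_i|+\delta_{iq}\mu_{\theta}+\epsilon_i)I_d$, so the local objective is strongly convex with Lipschitz gradient, and by Assumption \ref{assump:convexity_continuous}(ii) its Hessian stays $L$-Lipschitz (the added term is constant). Invoking \cite[\textit{Theorem} 11]{roosta2019sub}, for mini-batch sizes $|b_g|,|b_H|$ taken large enough (depending on $m_f,M_f,L$, the penalty/personalization parameters, and a target rate) there is $c_i\in(0,1)$ with
\[
\E\big[\|\x_i^{t,e}-\x_i^{t,\star}\|^2\big]\ \le\ c_i\,\|\x_i^{t,e-1}-\x_i^{t,\star}\|^2,\qquad e=1,\dots,E_i,
\]
the expectation being over the batches sampled at step $e$, conditional on the past. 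Compounding by the tower property over $e=1,\dots,E_i$ gives $\E[\|\x_i^{t,E_i}-\x_i^{t,\star}\|^2]\le c_i^{E_i}\|\x_i^{t,0}-\x_i^{t,\star}\|^2$, and summing over $i\in[n]$ with $\|\x^{t,E}-\x^{t,\star}\|^2=\sum_i\|\x_i^{t,E_i}-\x_i^{t,\star}\|^2$ and $\|\x^{t,0}-\x^{t,\star}\|_C^2=\sum_i c_i^{E_i}\|\x_i^{t,0}-\x_i^{t,\star}\|^2$ yields \eqref{eqn:NewtonConverge}.

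The delicate point---and the reason the ``sufficiently large batch sizes'' hypothesis is needed---is the one-step contraction itself. Since Algorithm \ref{alg:NoName} uses the full (un-damped) Newton step, the guarantee of \cite[\textit{Theorem} 11]{roosta2019sub} is of \emph{local} linear--quadratic type, so the estimate above holds once $\x_i^{t,e-1}$ lies in a fixed neighborhood of $\x_i^{t,\star}$ whose radius depends only on the problem constants; making $c_i$ uniform over all local rounds $e$ and all global rounds $t$---so that this lemma can be chained with the outer ADMM recursion in the proof of \textit{Theorem} \ref{thm:main}---additionally requires that the generated iterates remain in such a neighborhood, which is where boundedness of the iterates (supplied by the surrounding analysis) enters. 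Everything else is routine bookkeeping with the weighted norm $\|\cdot\|_C$.
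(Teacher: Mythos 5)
Your proof is correct and follows essentially the same route as the paper: both reduce \eqref{eqn:NewtonConverge} to a per-agent application of \cite[\textit{Theorem} 11]{roosta2019sub} and compound the one-step contraction over the $E_i$ local iterations before assembling the weighted norm $\|\cdot\|_C$. Your version merely makes explicit what the paper leaves implicit --- the block-separability of \eqref{eqn:ADMM_x}, the verification of the regularity hypotheses for the shifted local objectives, and the local (neighborhood-dependent) nature of the contraction constant $c_i$ --- all of which are accurate observations rather than deviations.
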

The next lemma characterizes the residual error of primal sub-problem \eqref{eqn:ADMM_x} induced by inexact updates as in \eqref{eqn:dist_ADMM_x}.
Recall that $\y^t = [\balpha^t;-\balpha^t]$ and $\bphi^t := E_s^{\top} \balpha^t$.
\begin{lemma}\label{lem:error}
    The residual error is given by:
    \begin{align}
        \e^t &:= \nabla \mathcal{L}_{\Gamma}(\x^{t+1}, \btheta^t, \z^t; \y^t, \blambda^t) \label{eqn:residual}\\
            &= \nabla f(\x^{t+1}) + E_s^{\top}\balpha^{t+1} + \mu_z E_u^{\top}(\z^{t+1} - \z^t) \notag\\
            & + S(\blambda^{t+1} + \mu_{\theta}(\btheta^{t+1} - \btheta^t)) + \Gamma(\x^{t+1} - \x^t). \label{eqn:expand}
    \end{align}
\end{lemma}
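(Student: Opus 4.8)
\emph{Proof strategy.} The plan is to treat \eqref{eqn:expand} as a purely algebraic identity. First I would differentiate the perturbed augmented Lagrangian \eqref{eqn:perturbed_AL} (with $\mathcal{L}$ as in \eqref{eqn:DistObjConsensusMatAL}) with respect to $\x$ and evaluate at $\x=\x^{t+1}$, giving
\begin{align*}
\e^t &= \nabla f(\x^{t+1}) + A^{\top}\y^t + S\blambda^t + \Gamma(\x^{t+1}-\x^t)\\
&\quad + \mu_z A^{\top}(A\x^{t+1}-B\z^t) + \mu_{\theta}S(S^{\top}\x^{t+1}-\btheta^t),
\end{align*}
so that everything reduces to rewriting the three $A$-dependent terms with the block incidence/Laplacian identities and then substituting the auxiliary and dual recursions of \textit{Lemma} \ref{lem:centralizeUpdates}.

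Using the block structure $A = [\,\hat A_s\otimes I_d;\ \hat A_d\otimes I_d\,]$, $B=[\,I;\ I\,]$, together with $\hat E_u=\hat A_s+\hat A_d$, $\hat E_s=\hat A_s-\hat A_d$ and the invariant $\y^t=[\balpha^t;-\balpha^t]$, I get $A^{\top}\y^t = E_s^{\top}\balpha^t = \bphi^t$, $A^{\top}B = E_u^{\top}$, and $A^{\top}A = \tfrac12(L_u+L_s) = D$. Hence $\e^t = \nabla f(\x^{t+1}) + \bphi^t + \mu_z(D\x^{t+1}-E_u^{\top}\z^t) + S\blambda^t + \mu_{\theta}S(S^{\top}\x^{t+1}-\btheta^t) + \Gamma(\x^{t+1}-\x^t)$. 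Next I would invoke the invariant $\z^{t+1}=\tfrac12 E_u\x^{t+1}$ to split $D\x^{t+1} = \tfrac12 L_u\x^{t+1} + \tfrac12 L_s\x^{t+1} = E_u^{\top}\z^{t+1} + \tfrac12 L_s\x^{t+1}$, and then the $\bphi$-recursion \eqref{eqn:matrix_phi} to combine $\bphi^t + \tfrac12\mu_z L_s\x^{t+1} = \bphi^{t+1} = E_s^{\top}\balpha^{t+1}$; this turns the middle block into $E_s^{\top}\balpha^{t+1} + \mu_z E_u^{\top}(\z^{t+1}-\z^t)$. Likewise, the $\blambda$-recursion \eqref{eqn:matrix_lambda} gives $\blambda^t = \blambda^{t+1}-\mu_{\theta}(S^{\top}\x^{t+1}-\btheta^{t+1})$, so $S\blambda^t + \mu_{\theta}S(S^{\top}\x^{t+1}-\btheta^t) = S(\blambda^{t+1}+\mu_{\theta}(\btheta^{t+1}-\btheta^t))$. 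Substituting both collapses the expression into exactly \eqref{eqn:expand}.

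I do not expect a genuine obstacle: the statement is an identity and uses no convexity or smoothness (those enter only in \textit{Lemma}s \ref{lem:NewtonConverge}, \ref{lem:multiple} and \textit{Theorem} \ref{thm:main}). The only points requiring care are keeping the block identities straight — in particular $A^{\top}A = D$ (a degree matrix, not a Laplacian) and the correct signs in $E_s = \hat A_s - \hat A_d$ — and recognizing that it is precisely the substitution of the round-$(t{+}1)$ auxiliary and dual variables that converts the raw gradient of $\mathcal{L}_{\Gamma}$ into the telescoped residual form of \eqref{eqn:expand}. As a sanity check one notes that if $\x^{t+1}$ were the exact minimizer $\x^{t,\star}$ of \eqref{eqn:ADMM_x}, then $\e^t=0$, so \eqref{eqn:expand} isolates exactly the first-order inexactness that is carried forward by the dual updates.
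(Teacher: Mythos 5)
Your proposal is correct and follows essentially the same route as the paper's proof: differentiate the perturbed augmented Lagrangian at $\x^{t+1}$, use the block identities $A^{\top}\y^t=E_s^{\top}\balpha^t$, $A^{\top}A=D=\tfrac12(L_u+L_s)$, $A^{\top}B=E_u^{\top}$ together with the invariants $\y^t=[\balpha^t;-\balpha^t]$, $\z^t=\tfrac12E_u\x^t$, and then substitute the $\bphi$- and $\blambda$-recursions of \textit{Lemma}~\ref{lem:centralizeUpdates} to telescope into \eqref{eqn:expand}. The only cosmetic difference is that you split $D\x^{t+1}$ directly via $\z^{t+1}=\tfrac12E_u\x^{t+1}$ whereas the paper passes through the intermediate form $\tfrac{\mu_z}{2}(2D\x^{t+1}-L_u\x^t)$; the algebra is the same.
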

Note that from \textit{Assumption} \ref{assump:convexity_continuous} it follows that $\mathcal{L}_{\Gamma}(\x, \btheta^t, \z^t; \y^t, \blambda^t)$ in \eqref{eqn:perturbed_AL} has Lipschitz continuous gradient with parameter
\begin{equation}\label{eqn:Lipschitz_perturbAL}
    M := M_f + \mu_z\max_{i\in[n]} |\mathcal{N}_i| + \mu_{\theta}  + \max_{i\in[n]}\epsilon_i.
\end{equation}
The following lemma provides an upper bound for the residual error given in \textit{Lemma} \ref{lem:error} in relation to the difference of two consecutive primal iterates $\x^{t+1}-\x^t$.
\begin{lemma}\label{lem:multiple}
Recall $C := \textbf{diag}\left(c_1^{E_1}, \hdots, c_n^{E_n}\right)\otimes I_{nd}$.
For any $i \in [n]$, under sufficiently large batch sizes there exists $c_i \in (0,1)$ so that for any $\xi_i \in \left(0, c_i^{-1}-1\right)$ it holds that:
\begin{equation}\label{eqn:error_bound}
    \E \left[ \|\e^t\|^2\right] \le \E\left[\|\x^{t+1}-\x^t\|^2_{\mathcal{T}(E)}\right], 
\end{equation}
where $\mathcal{T}(E):=\textbf{diag}\left(\tau_1(E_1), \hdots, \tau_n(E_n)\right)\otimes I_{nd} \in \R^{nd \times nd}$ with
\begin{equation}\label{eqn:tau_def}
    \tau_i(E_i):=\frac{\left(1+\xi_i^{-1}\right) M^2 c_i^{E_i}}{1 - (1+\xi_i)c_i^{E_i}}, \ i \in [n].
\end{equation}
\end{lemma}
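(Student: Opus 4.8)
The plan is to bound $\|\e^t\|^2$ by comparing the inexact iterate $\x^{t+1}=\x^{t,E}$ with the exact minimizer $\x^{t,\star}$ of the perturbed augmented Lagrangian \eqref{eqn:perturbed_AL}, for which the residual vanishes, i.e. $\nabla\mathcal{L}_\Gamma(\x^{t,\star},\btheta^t,\z^t;\y^t,\blambda^t)=0$. First I would use the expression \eqref{eqn:residual} together with the $M$-Lipschitz continuity of $\nabla\mathcal{L}_\Gamma(\cdot,\btheta^t,\z^t;\y^t,\blambda^t)$ (parameter $M$ as in \eqref{eqn:Lipschitz_perturbAL}) to write
\begin{equation}
\|\e^t\| = \big\|\nabla\mathcal{L}_\Gamma(\x^{t,E},\btheta^t,\z^t;\y^t,\blambda^t) - \nabla\mathcal{L}_\Gamma(\x^{t,\star},\btheta^t,\z^t;\y^t,\blambda^t)\big\| \le M\,\|\x^{t,E}-\x^{t,\star}\|. \notag
\end{equation}
Taking conditional expectation over the current mini-batch selections and invoking \textit{Lemma} \ref{lem:NewtonConverge} gives $\E[\|\e^t\|^2]\le M^2\,\|\x^{t,0}-\x^{t,\star}\|^2_C$, where $C=\textbf{diag}(c_1^{E_1},\hdots,c_n^{E_n})\otimes I_{nd}$ and $\x^{t,0}=\x^t$. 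The remaining task is to convert the bound on $\|\x^t-\x^{t,\star}\|_C^2$ into a bound on $\|\x^{t+1}-\x^t\|^2$ up to the stated diagonal weighting $\mathcal{T}(E)$.

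For this second part I would use the triangle inequality $\|\x^t-\x^{t,\star}\| \le \|\x^t-\x^{t+1}\| + \|\x^{t+1}-\x^{t,\star}\|$ in a weighted sense, coordinate-block by coordinate-block (per agent $i$), combined with the Young-type inequality $\|a+b\|^2\le(1+\xi_i)\|a\|^2 + (1+\xi_i^{-1})\|b\|^2$ for the free parameter $\xi_i>0$. Applied per agent this yields
\begin{equation}
\|\x_i^t-\x_i^{t,\star}\|^2 \le (1+\xi_i)\,\|\x_i^{t+1}-\x_i^{t,\star}\|^2 + (1+\xi_i^{-1})\,\|\x_i^{t+1}-\x_i^t\|^2. \notag
\end{equation}
Multiplying the $i$-th block of $\E[\|\x^t-\x^{t,\star}\|_C^2]$ by $c_i^{E_i}$, substituting, and again applying \textit{Lemma} \ref{lem:NewtonConverge} to the term $\E[c_i^{E_i}\|\x_i^{t+1}-\x_i^{t,\star}\|^2]$ — noting that $\E[\|\x_i^{t+1}-\x_i^{t,\star}\|^2]\le c_i^{E_i}\|\x_i^t-\x_i^{t,\star}\|^2$ so this reproduces a multiple of $\E[\|\x_i^t-\x_i^{t,\star}\|^2]$ — lets me move that term to the left-hand side. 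This is where the constraint $\xi_i\in(0,c_i^{-1}-1)$ enters: it guarantees $1-(1+\xi_i)c_i^{E_i}>0$ (indeed $(1+\xi_i)c_i<1$ forces $(1+\xi_i)c_i^{E_i}<1$ since $c_i<1$ and $E_i\ge1$), so that after rearranging, the coefficient $c_i^{E_i}/(1-(1+\xi_i)c_i^{E_i})$ is positive and finite. Collecting the constants $M^2$ and $(1+\xi_i^{-1})$ then gives exactly $\tau_i(E_i)$ as in \eqref{eqn:tau_def}, and summing over the blocks produces $\E[\|\e^t\|^2]\le\E[\|\x^{t+1}-\x^t\|^2_{\mathcal{T}(E)}]$.

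The main obstacle I anticipate is the bookkeeping needed to keep the per-agent weights $c_i^{E_i}$ consistent throughout the chain of inequalities: the weighted norm $\|\cdot\|_C$ does not interact cleanly with the triangle inequality unless everything is decomposed block-wise, and one must be careful that the conditional expectation in \textit{Lemma} \ref{lem:NewtonConverge} is applied only to the current mini-batch draw (the past being fixed), so that $\x^{t,\star}$, which depends only on $\x^t,\btheta^t,\z^t,\y^t,\blambda^t$, is treated as deterministic at this stage. A secondary subtlety is verifying that the self-referential step — bounding $\E[\|\x^t-\x^{t,\star}\|^2]$ in terms of itself and then absorbing — is legitimate, i.e. that both sides are finite; this follows from strong convexity of $\mathcal{L}_\Gamma$ in $\x$ (hence a well-defined, finite $\x^{t,\star}$) under \textit{Assumption} \ref{assump:convexity_continuous}. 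Once the block decomposition is set up correctly, the algebra is routine.
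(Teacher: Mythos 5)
Your proposal is correct and follows essentially the same route as the paper's proof: the Lipschitz bound $\|\e^t\|\le M\|\x^{t+1}-\x^{t,\star}\|$ (using that the gradient of the perturbed AL vanishes at $\x^{t,\star}$), the per-agent Young inequality with parameter $\xi_i$, and the self-referential absorption via \textit{Lemma}~\ref{lem:NewtonConverge} under the condition $\xi_i<c_i^{-1}-1$, yielding exactly the constant $\tau_i(E_i)$ in \eqref{eqn:tau_def}. The only (immaterial) difference is that you absorb the contraction into $\|\x_i^t-\x_i^{t,\star}\|^2$ whereas the paper rearranges to bound $\E[\|\x_i^{t+1}-\x_i^{t,\star}\|^2]$; both give the same coefficient.
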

In what follows, we present the main convergence theorem that establishes linear convergence in expectation of DRUID-VL.
\begin{theorem}\label{thm:main}
Define $P := \textbf{diag}(p_i)$ and
\begin{align}
    \mathcal{H} &:= \textbf{diag}\left(\Gamma, 2\mu_{z} I_{|\mathcal{E}|d}, 2\mu_z^{-1} I_{|\mathcal{E}|d}, \mu_{\theta}I_d, \mu_{\theta}^{-1}I_d\right), \label{eqn:def_H}\\
    \v &:= \left[\x;\z;\balpha;\btheta;\blambda\right] \in \R^{(n+2|\mathcal{E}|+2)d}. \label{eqn:def_v}
\end{align}
Let
\begin{equation}\label{eqn:eta_selection}
    \begin{aligned}
        \eta = \min &\Bigg\{\min_{i \in [n]}\frac{\mu_{\theta} \sigma^{+}_{\min}(\epsilon_i - \zeta\tau_i(E_i))}{5(\tau_i(E_i)+\epsilon_i^2)}, \\ &\left(\frac{2m_f M_f}{m_f+M_f} - \frac{1}{\zeta} \right) \frac{1}{\max_{i \in [n]}\epsilon_i+\mu_{\theta}(\sigma^{L_u}_{\max} + 2)} , \\
        &  \frac{2}{5}\frac{\mu_{\theta} \sigma^{+}_{\min}}{m_f + M_f}, \frac{\sigma^{+}_{\min}}{5 \max\left\{1, \sigma^{L_u}_{\max}\right\}}, \frac{1}{2}\Bigg\},
    \end{aligned}
\end{equation}
where $\sigma^+_{\min}$ denotes the smallest positive eigenvalue of $\begin{bmatrix} E_s \\ S^{\top} \end{bmatrix} \begin{bmatrix} E_s \, S^{\top} \end{bmatrix}$, $\sigma^{L_u}_{\max}$ denotes the maximum eigenvalue of $L_u$, $\zeta \in \left(\frac{m_f + M_f}{2m_f M_f}, \min_{i \in [n]}\frac{\epsilon_i}{\tau_i(E_i)}\right)$, and $\epsilon_i > \frac{\tau_i(E_i)(m_f + M_f)}{2m_f M_f}, i \in [n]$,
then it holds that: 
\begin{equation}\label{eqn:Theorem_result}
    \E\left[\|\v^{t+1} - \v^{\star}\|^2_{\mathcal{H}P^{-1}}\right] \le \left(1-\frac{\eta p_{\min}}{1+\eta}\right)\|\v^{t} - \v^{\star}\|^2_{\mathcal{H}P^{-1}}.
\end{equation}
\end{theorem}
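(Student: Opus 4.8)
The plan is to regard Algorithm~\ref{alg:NoName} as an inexact, agent-randomized ADMM on the reformulation \eqref{eqn:DistObjConsensusMat} and to use $\|\v^t-\v^\star\|^2_{\mathcal{H}P^{-1}}$ as a Lyapunov function. First I would characterize the limit point: by the invariants $\balpha^t+\bbeta^t=0$, $\z^t=\tfrac12 E_u\x^t$, $\bphi^t=E_s^\top\balpha^t$ of \textit{Lemma}~\ref{lem:centralizeUpdates}, any fixed point $\v^\star=[\x^\star;\z^\star;\balpha^\star;\btheta^\star;\blambda^\star]$ of \eqref{eqn:matrix_ADMM_update} satisfies the KKT system $\nabla f(\x^\star)+E_s^\top\balpha^\star+S\blambda^\star=0$, $\blambda^\star\in\partial r(\btheta^\star)$, $E_s\x^\star=0$, $S^\top\x^\star=\btheta^\star$, $\z^\star=\tfrac12 E_u\x^\star$, which is equivalent (under \textit{Assumption}~\ref{assump:graph_connected}) to $\x^\star=\mathbf{1}\otimes\hat\x^\star$ with $\hat\x^\star$ the minimizer in \eqref{eqn:DistObjDef}. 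The residual $\e^t$ of \textit{Lemma}~\ref{lem:error} is precisely the amount by which the $E_i$-step inexact $\x$-update violates stationarity for \eqref{eqn:ADMM_x}.

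Second, I would establish the deterministic one-round inequality assuming every agent is active. Subtracting the KKT relations from the iterate relations \eqref{eqn:matrix_x}--\eqref{eqn:matrix_lambda} and pairing with $\x^{t+1}-\x^\star$, $\z^{t+1}-\z^\star$, $\balpha^{t+1}-\balpha^\star$, $\btheta^{t+1}-\btheta^\star$, and $\blambda^{t+1}-\blambda^\star$ respectively, I would use the dual recursions \eqref{eqn:matrix_phi}, \eqref{eqn:matrix_lambda} to rewrite the cross terms and complete squares in the $\mathcal{H}$-metric (the $2\mu_z$ / $2\mu_z^{-1}$ and $\mu_\theta$ / $\mu_\theta^{-1}$ splits of $\mathcal{H}$ are exactly what make the primal- and dual-residual squares combine). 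Monotonicity of $\partial r$ from \textit{Assumption}~\ref{assump:convexity_continuous}(iii) disposes of the $\btheta$ term, and $m_f$-strong convexity with $M_f$-Lipschitz gradient of $f$ yield $(\x^{t+1}-\x^\star)^\top(\nabla f(\x^{t+1})-\nabla f(\x^\star))\ge\tfrac{m_fM_f}{m_f+M_f}\|\x^{t+1}-\x^\star\|^2+\tfrac{1}{m_f+M_f}\|\nabla f(\x^{t+1})-\nabla f(\x^\star)\|^2$. This gives $\|\v^t-\v^\star\|^2_{\mathcal{H}}-\|\v^{t+1}-\v^\star\|^2_{\mathcal{H}}\ge Q^t-2(\x^{t+1}-\x^\star)^\top\e^t$ with $Q^t$ a coercive quadratic in $\x^{t+1}-\x^t$, $E_s\x^{t+1}$, $S^\top\x^{t+1}-\btheta^{t+1}$, and the gradient difference.

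Third, I would absorb the inexactness. Splitting $-2(\x^{t+1}-\x^\star)^\top\e^t$ via Young's inequality with the parameter $\zeta$, the $\zeta^{-1}\|\x^{t+1}-\x^\star\|^2$-part is dominated by the strong-convexity slack — hence the constraint $\zeta>\tfrac{m_f+M_f}{2m_fM_f}$ and the factor $\tfrac{2m_fM_f}{m_f+M_f}-\tfrac1\zeta$ in \eqref{eqn:eta_selection} — while $\zeta\|\e^t\|^2$ is controlled by \textit{Lemma}~\ref{lem:multiple}: conditioning on the activation pattern and taking mini-batch expectation, $\E[\|\e^t\|^2]\le\E[\|\x^{t+1}-\x^t\|^2_{\mathcal{T}(E)}]$, which is absorbed into the $\|\x^{t+1}-\x^t\|^2_\Gamma$ part of $Q^t$ provided $\epsilon_i>\zeta\tau_i(E_i)$, i.e. $\zeta<\min_i\epsilon_i/\tau_i(E_i)$ and $\epsilon_i>\tau_i(E_i)(m_f+M_f)/(2m_fM_f)$. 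One is left with $\E[\|\v^t-\v^\star\|^2_{\mathcal{H}}]-\E[\|\v^{t+1}-\v^\star\|^2_{\mathcal{H}}]\ge\E[\tilde Q^t]$, where $\tilde Q^t\succeq0$ is still coercive in the same residuals, now with strictly positive coefficients proportional to $\epsilon_i-\zeta\tau_i(E_i)$ and $\tfrac{2m_fM_f}{m_f+M_f}-\tfrac1\zeta$.

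Finally — the step I expect to be the crux — I would convert the descent into the rate \eqref{eqn:Theorem_result}. The coercivity part requires lower-bounding $\|\v^{t+1}-\v^\star\|^2_{\mathcal{H}}$ by a multiple of $\tilde Q^t$: the $\x$-block follows by the triangle inequality from $\x^{t+1}-\x^t$ and the residuals; the $\z$-block via $\z^{t+1}-\z^\star=\tfrac12 E_u(\x^{t+1}-\x^\star)$ and $\sigma^{L_u}_{\max}$; and — the delicate part — the dual blocks via the inexact stationarity, noting that $E_s^\top(\balpha^{t+1}-\balpha^\star)$ and $S(\blambda^{t+1}-\blambda^\star)$ lie in the range of $[E_s\,;\,S^\top]^\top$, on which that operator is injective, so $\sigma^+_{\min}$ turns $\|E_s^\top(\balpha^{t+1}-\balpha^\star)+S(\blambda^{t+1}-\blambda^\star)\|^2$ — bounded through $\e^t$, the gradient difference, $\Gamma(\x^{t+1}-\x^t)$, and the dual increments — into control of $\balpha^{t+1}-\balpha^\star$ and $\blambda^{t+1}-\blambda^\star$ themselves. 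Matching constants against the five quantities inside the $\min$ in \eqref{eqn:eta_selection} yields $\tilde Q^t\ge\eta\,\|\v^{t+1}-\v^\star\|^2_{\mathcal{H}}$ on the updated coordinates. For asynchrony, since activations are independent with $P(\text{agent }i\text{ active})=p_i$, each coordinate block of $\v^{t+1}$ is the fully-updated value with probability $p_i$ and the previous value otherwise, so weighting the squared norm by $P^{-1}$ makes the cross-agent coupling compatible with the expectation and inserts the factor $p_{\min}$; heuristically the per-block bound reads ``contract by $\tfrac{1}{1+\eta}$ with probability $p_i$, no change otherwise'', and aggregating gives $\E[\|\v^{t+1}-\v^\star\|^2_{\mathcal{H}P^{-1}}]\le(p_{\min}\tfrac{1}{1+\eta}+(1-p_{\min}))\|\v^t-\v^\star\|^2_{\mathcal{H}P^{-1}}=(1-\tfrac{\eta p_{\min}}{1+\eta})\|\v^t-\v^\star\|^2_{\mathcal{H}P^{-1}}$. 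The main obstacle is precisely the bookkeeping here: forcing a single $\eta$ to simultaneously dominate the dual coercivity, the $\z$ coercivity, the strong-convexity term, and the inexactness-absorbed primal term, which is what dictates the elaborate minimum in \eqref{eqn:eta_selection}, together with handling the stale-neighbor terms in \eqref{eqn:dist_ADMM} so that the $P^{-1}$-weighted expectation genuinely telescopes.
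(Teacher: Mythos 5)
Your proposal follows essentially the same route as the paper's proof: combining the KKT conditions with the residual expansion of \textit{Lemma}~\ref{lem:error}, using the strong-convexity/smoothness inequality and the polarization identity to obtain descent in the $\mathcal{H}$-norm, absorbing $-2(\x^{t+1}-\x^\star)^\top\e^t$ via Young's inequality with parameter $\zeta$ and \textit{Lemma}~\ref{lem:multiple}, upper-bounding the dual blocks of $\|\v^{t+1}-\v^\star\|^2_{\mathcal{H}}$ through $\sigma^+_{\min}$ and the $\z$, $\btheta$ blocks through $\sigma^{L_u}_{\max}$ and the prox relation, and matching constants to obtain the $\min$ in \eqref{eqn:eta_selection}, with the asynchronous extension handled exactly as you sketch (the paper defers it to the DRUID analysis). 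The plan is correct and complete in its identification of the key steps.
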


\begin{figure*}[!ht]
\centering
\begin{subfigure}[t]{0.4\textwidth}
\includegraphics[width=\columnwidth]{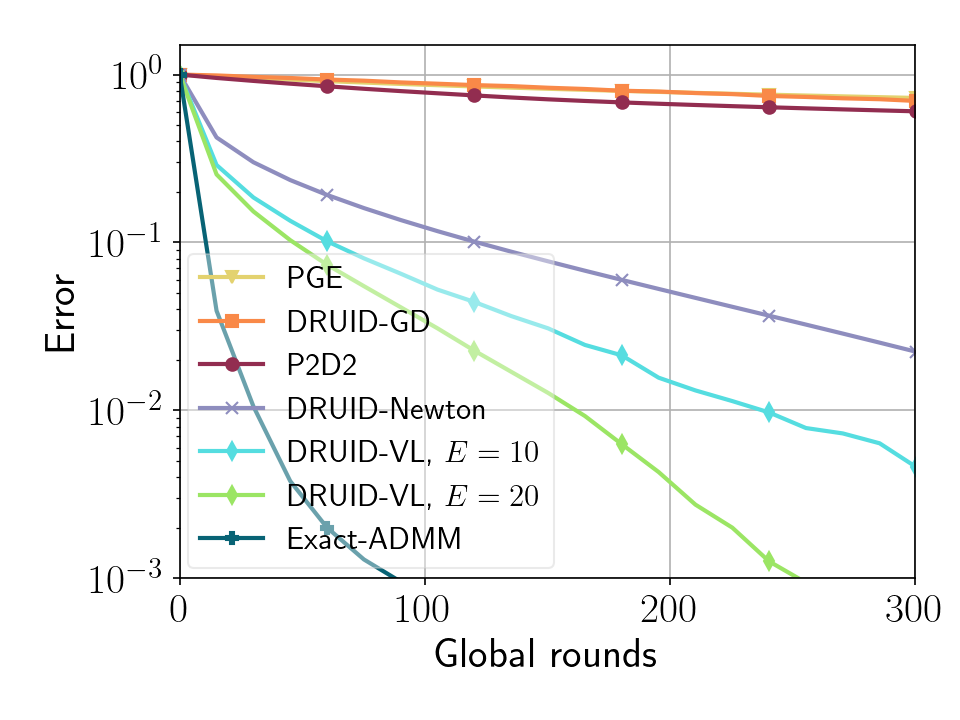}
\caption{Relative error versus global rounds.}\label{fig:convergence}
\end{subfigure}
\begin{subfigure}[t]{0.4\textwidth}
\includegraphics[width=\columnwidth]{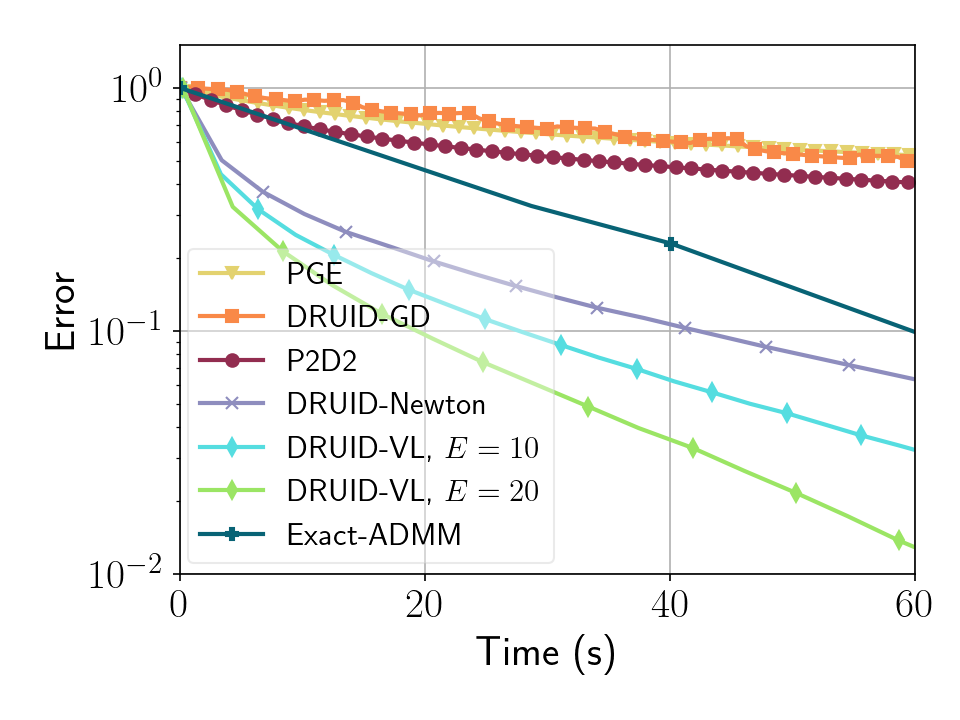}
\caption{Relative error versus time.}\label{fig:convergence_time}
\end{subfigure}
\caption{Comparisons of DRUID-VL (synchronous setting and common $E$ for all agents) with baseline algorithms in terms of relative error $\frac{\|\x^t - \x^{\star}\|^2}{\|\x^0 - \x^{\star}\|^2}$; convergence versus (a) global rounds and (b) run-time.}
\label{fig:all}
\end{figure*}


\begin{remark}
ADMM is a first-order method and thus converges linearly (the best possible rate \cite{shi2014linear}). 
Our goal is to accelerate convergence inside the ADMM framework by performing variable work loads on agents and therefore the result in \textit{Theorem} \ref{thm:main} falls into the taxonomy of linear convergence.
On the other hand, superlinear convergence to the global optimum in distributed networks requires heavy communication costs among agents (usually multiple message exchanges in one round \cite{wei2013distributed, mokhtari2016network, crane2019dingo}, and does not admit asynchronous implementation).
In contrast, our proposed method requires a single communication step of size $d$ and is asynchronous, suitable for real-life multi-agent networks where communication is the main bottleneck.
\end{remark}

In the following corollary, we show that the personalized coefficient $\epsilon_i$ (the diagonal entries of $\Gamma$ in \eqref{eqn:perturbed_AL}) can be further optimized to adjust to the heterogeneity of variable work loads, so as to achieve faster convergence (i.e., increase $\eta$ in \eqref{eqn:eta_selection}).
\begin{corollary}\label{lem:tune_epsilon}
By selecting
\begin{equation}\label{eqn:epsilon_selection}
    \epsilon_i = \zeta \tau_i(E_i) + \sqrt{\zeta^2 \tau_i^2(E_i) + \tau_i(E_i)},
\end{equation}
for $\zeta > \frac{m_f + M_f}{2m_f M_f}$, \eqref{eqn:Theorem_result} holds with
\begin{equation}\label{eqn:new_eta_selection}
    \begin{aligned}
        \eta = \min &\Bigg\{\tfrac{\mu_{\theta} \sigma^{+}_{\min}}{10\zeta}\min_{i \in [n]} \left(\sqrt{\tau^2_i(E_i) + \zeta^{-2}\tau_i(E_i)} + \tau_i(E_i) \right)^{-1}, \\ &\left(\frac{2m_f M_f}{m_f+M_f} - \frac{1}{\zeta} \right) \frac{1}{\max_{i \in [n]}\epsilon_i+\mu_{\theta}(\sigma^{L_u}_{\max} + 2)} , \\
        & \frac{2}{5}\frac{\mu_{\theta} \sigma^{+}_{\min}}{m_f + M_f}, \frac{\sigma^{+}_{\min}}{5 \max\left\{1, \sigma^{L_u}_{\max}\right\}}, \frac{1}{2}\Bigg\}.
    \end{aligned}
\end{equation}
\end{corollary}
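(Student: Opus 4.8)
The plan is to derive the corollary as a direct specialization of \textit{Theorem} \ref{thm:main}: substitute the prescribed $\epsilon_i$ from \eqref{eqn:epsilon_selection} into the rate expression \eqref{eqn:eta_selection} and simplify. First I would check admissibility of the choice. Dividing \eqref{eqn:epsilon_selection} by $\tau_i(E_i)>0$ gives $\epsilon_i/\tau_i(E_i)=\zeta+\sqrt{\zeta^2+\tau_i(E_i)^{-1}}>\zeta$, so $\zeta<\min_{i\in[n]}\epsilon_i/\tau_i(E_i)$ holds automatically; combined with the corollary's hypothesis $\zeta>\frac{m_f+M_f}{2m_fM_f}$, the interval for $\zeta$ required by \textit{Theorem} \ref{thm:main} is nonempty and contains the prescribed $\zeta$. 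Moreover $\epsilon_i>\zeta\tau_i(E_i)>\frac{\tau_i(E_i)(m_f+M_f)}{2m_fM_f}$, so the lower bound on $\epsilon_i$ in \textit{Theorem} \ref{thm:main} is met, while the batch-size and $\xi_i$ conditions underlying \textit{Lemma} \ref{lem:multiple} are untouched. Hence \eqref{eqn:Theorem_result} holds with $\eta$ as in \eqref{eqn:eta_selection}, and it remains only to rewrite that $\eta$ under \eqref{eqn:epsilon_selection}.

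Next I would simplify \eqref{eqn:eta_selection}. Every entry of the minimum except the first is either independent of $\epsilon_i$ or (the second entry, whose denominator contains $\max_{i}\epsilon_i$) carried over verbatim with $\epsilon_i$ now given by \eqref{eqn:epsilon_selection}; these are exactly the last four entries of \eqref{eqn:new_eta_selection}. For the first entry, write $a:=\tau_i(E_i)$ and $s:=\sqrt{\zeta^2a^2+a}$, so that $\epsilon_i=\zeta a+s$ and $\epsilon_i-\zeta a=s$. Expanding, $\epsilon_i^2=\zeta^2a^2+2\zeta as+s^2=2\zeta^2a^2+a+2\zeta as$, whence $a+\epsilon_i^2=2\zeta^2a^2+2a+2\zeta as=2(\zeta a+s)s=2\epsilon_i(\epsilon_i-\zeta a)$. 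Therefore
\[
\frac{\epsilon_i-\zeta\tau_i(E_i)}{\tau_i(E_i)+\epsilon_i^2}=\frac{s}{2\epsilon_is}=\frac{1}{2\epsilon_i},
\]
and since factoring $\zeta$ out of \eqref{eqn:epsilon_selection} gives $\epsilon_i=\zeta\big(\tau_i(E_i)+\sqrt{\tau_i^2(E_i)+\zeta^{-2}\tau_i(E_i)}\big)$, the first entry of \eqref{eqn:eta_selection} becomes
\[
\min_{i\in[n]}\frac{\mu_{\theta}\sigma^{+}_{\min}}{5}\cdot\frac{1}{2\epsilon_i}=\frac{\mu_{\theta}\sigma^{+}_{\min}}{10\zeta}\min_{i\in[n]}\Big(\sqrt{\tau_i^2(E_i)+\zeta^{-2}\tau_i(E_i)}+\tau_i(E_i)\Big)^{-1},
\]
which is precisely the first entry of \eqref{eqn:new_eta_selection}. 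Together with the four carried-over entries this yields \eqref{eqn:new_eta_selection}, proving the claim.

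Finally, I would remark that \eqref{eqn:epsilon_selection} is the natural choice: for fixed $\zeta$, the map $\epsilon_i\mapsto(\epsilon_i-\zeta\tau_i(E_i))/(\tau_i(E_i)+\epsilon_i^2)$ on $(\zeta\tau_i(E_i),\infty)$ is unimodal (its derivative has numerator $-\epsilon_i^2+2\zeta\tau_i(E_i)\epsilon_i+\tau_i(E_i)$, positive then negative), with unique maximizer the positive root of $\epsilon_i^2-2\zeta\tau_i(E_i)\epsilon_i-\tau_i(E_i)=0$, i.e. exactly \eqref{eqn:epsilon_selection}; so this choice makes the first term of $\eta$ as large as possible for the given $\zeta$. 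I do not expect a real obstacle here — the argument is elementary calculus plus bookkeeping. The only points requiring care are: (i) confirming, as above, that the hypotheses of \textit{Theorem} \ref{thm:main} are preserved (in particular $\frac{2m_fM_f}{m_f+M_f}-\frac1\zeta>0$, which is guaranteed by $\zeta>\frac{m_f+M_f}{2m_fM_f}$, so the second term of $\eta$ stays positive); and (ii) noting that enlarging $\epsilon_i$ also enlarges $\max_i\epsilon_i$ and hence \emph{shrinks} the second term of $\eta$, so \eqref{eqn:epsilon_selection} is optimal for the first term only, not for $\eta$ as a whole — which is why the corollary merely asserts the substituted value rather than a global optimality statement.
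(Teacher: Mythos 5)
Your proposal is correct and follows essentially the same route as the paper: maximize the first term of the minimum in \eqref{eqn:eta_selection} over $\epsilon_i$ (the unique positive root of $\epsilon_i^2-2\zeta\tau_i(E_i)\epsilon_i-\tau_i(E_i)=0$ is exactly \eqref{eqn:epsilon_selection}) and substitute back. Your write-up is in fact more careful than the paper's three-line proof, since you explicitly verify that the hypotheses of \textit{Theorem}~\ref{thm:main} on $\zeta$ and $\epsilon_i$ remain satisfied and carry out the simplification $(\epsilon_i-\zeta\tau_i)/(\tau_i+\epsilon_i^2)=1/(2\epsilon_i)$ in full.
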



\section{EXPERIMENTS}
We consider distributed sparse logistic regression, i.e, 
$$l(\s;\hat \x):= \log\left(1+e^{\hat \x^{\top} \mathbf{w}} \right) +(1-y)\hat \x^{\top} \mathbf{w},$$  
for $\s=(\mathbf{w},y)\in\mathbb{R}^d\times\{0,1\}$ and $r(\hat x) = \gamma \|\hat x\|_1$, with $\gamma = 2\times10^{-6}$.
We take $4,000$ samples of dimension $d=22$ from \emph{ijcnn1}\footnote{Available at https://www.csie.ntu.edu.tw/\textasciitilde cjlin/libsvm/.} and evenly distribute them to $10$ agents, whose network topology is generated by the Erd\H{o}s-R\'{e}nyi model with $p=0.2$ (each edge is generated independently with probability $0.2$).
We first evaluate our proposed method, DRUID-VL (with the same work load $E=10$ and $20$ across all agents), against five baselines, namely PGE \cite{shi2015extra}, P2D2 \cite{alghunaim2019P2D2}, DRUID-GD/-Newton \cite{li2022DRUID}, and Exact-ADMM (i.e., solving \eqref{eqn:ADMM_x} `exactly', which means assuring a primal-dual gap of $10^{-5}$, in \emph{Primal update} of Algorithm \ref{alg:NoName}, while other stages are unchanged), considering only synchronous updates as all but DRUID do not support asynchronous implementation.
We set $\epsilon=\mu_{\theta} = 2 \mu_{z} = 10^{-4}$ (the selection follows from our analysis in \eqref{eqn:upper_bound_first2}), and sampling batch size $|b_g| = |b_H| = 100$ for DRUID-VL.

The convergence of each algorithm is plotted in Fig.~\ref{fig:convergence} and \ref{fig:convergence_time}, in terms of global rounds and time, respectively.
Fig.~\ref{fig:convergence} demonstrates that for a fixed target error, DRUID-VL with $E=10/20$ requires $41\%/59\%$ less global rounds compared to DRUID-Newton, while the benefits are much higher for the other (gradient-based) baselines.
Fig.~\ref{fig:convergence_time} illustrates the progress of each algorithm per actual run-time (this is done since the cost of one round using a second-order local solver is higher than for gradient-based methods). 
We deduce that for a given time, DRUID-VL with $E=10/20$ decreases the relative error by $1.96\times/4.76\times$ of DRUID-Newton, and $12.5\times/33.3\times$ of PGE, DRUID-GD, and P2D2.
We also emphasize that although Exact-ADMM attains the fastest convergence in terms of global rounds (Fig.~\ref{fig:convergence}), solving the sub-problem exactly brings about heavy computational overhead. 
As a result,  Exact-ADMM has the slowest convergence speed among all DRUID variants in real-time scale (Fig.~\ref{fig:convergence_time}).
These findings corroborate that our proposed method better exploits local computation resources to reduce communication costs and run-time.

We further study the impact of heterogeneity and participation rate in Fig.~\ref{fig:heterogenity}, in terms of the communication cost to reach a given accuracy ($10^{-2}$ in all cases). 
The former is achieved by three alternatives for work load selection (for fair comparison we set the average load fixed): 
\emph{Equal} - $E_i = 10$ for all $i$; 
\emph{Uniform} - $\left\{E_i\right\}_{i=1}^n$ are sampled at uniformly random from $\sim\mathcal{U}\{1,19\}$; 
\emph{Extreme} - half of agents are assigned $E_i = 1$ while the others have $E_i = 19$.
We use the same parameter setting as in Fig.~\ref{fig:all} and fix $\epsilon_i = 10^{-4}$ for all agents.
Fig.~\ref{fig:heterogenity} shows that the participation rate $p_{\min}$ does not have a significant impact on the communication burden per agent, in full accord with \eqref{eqn:Theorem_result}. 
More interestingly, heterogeneity is shown to play a major role in exacerbating communication costs:
higher variance of work load $\left\{E_i\right\}_{i=1}^n$ (e.g., in our case, \emph{Extreme} selection) yields significant rise in communication cost.

\begin{figure}[htbp]
    \centering
    \includegraphics[width=0.3\textwidth]{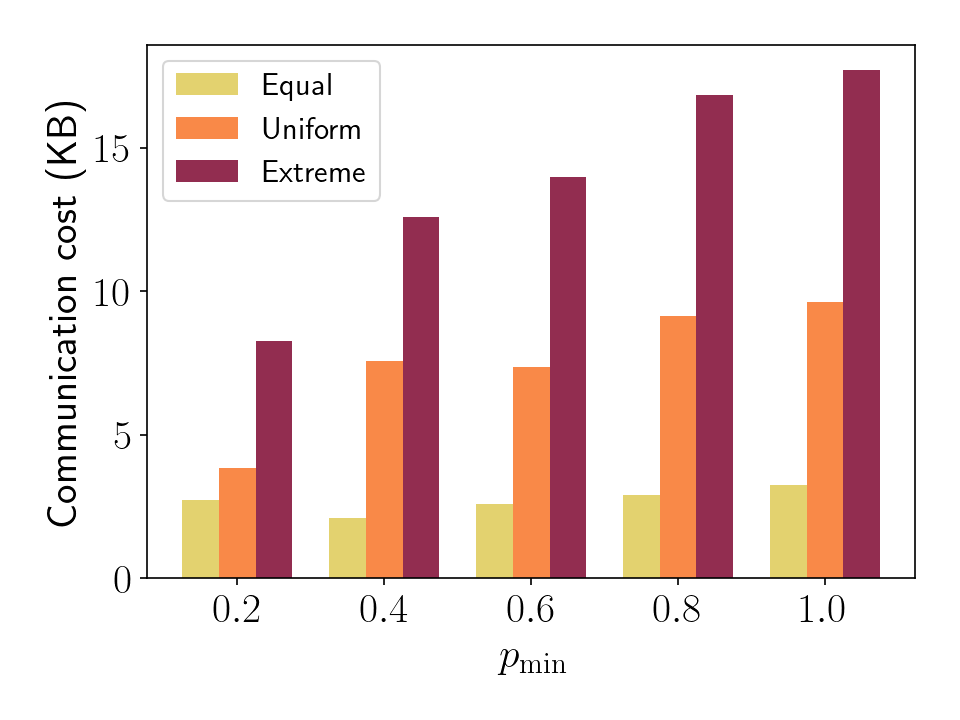}
    \caption{Average communication cost per agent to reach an error of $10^{-2}$ under variable heterogeneity and participation rate.}
    \label{fig:heterogenity}
\end{figure}

Based on the analysis in \textit{Corollary} \ref{lem:tune_epsilon}, we further tune the local hyperparameters $\epsilon_i$ so as to adjust to heterogeneity. 
Given that the expression in \eqref{eqn:epsilon_selection} cannot be analytically evaluated in a real scenario (where constants $M_f, m_f, c_i$ are unknown), we choose a surrogate rule based on \eqref{eqn:tau_def} and \eqref{eqn:epsilon_selection} as:
$$\epsilon_i = \bar{\epsilon}c^{E_i - \bar{E}}[1-(1+\zeta)c^{\bar{E}}]/[1-(1+\zeta)c^{E_i}],$$ 
where we select $\bar{\epsilon}=10^{-4}$, $\bar{E}=10$, $c=0.98$, $\zeta=5\times10^{-3}$.
The results are plotted in Fig.~\ref{fig:optimize_epsilon}, for the \emph{Uniform} heterogeneity setting (as in Fig.~\ref{fig:heterogenity}).
We observe a noticeable reduction of the number of global rounds needed to reach a common accuracy ($10^{-2}$).
Maximum gain from hyperparameter tuning is achieved at $p_{\min}=0.4$ ($28.2\%$ less rounds).
Last but not least, the gain increases when the participation rate $p_{\min}$ decreases, indicating that our proposed method is well-suited to adapt to heterogeneity in the most challenging (albeit common in real-systems) scenario of `high' asynchrony.

\begin{figure}[htbp]
    \centering
    \includegraphics[width=0.42\textwidth]{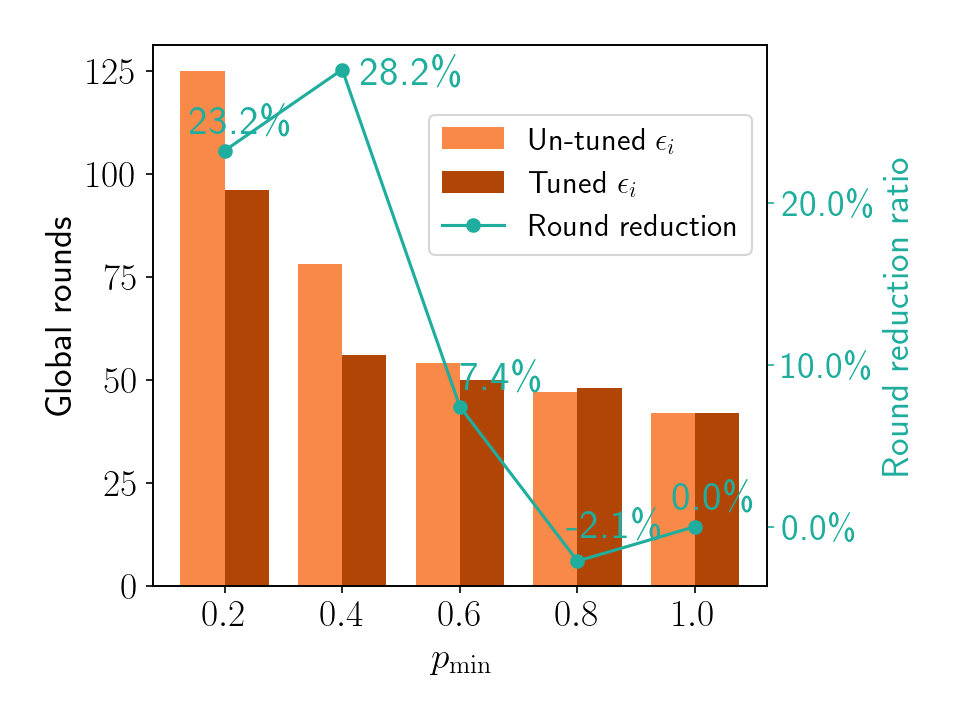}
    \caption{Global rounds required to reach an error of $10^{-2}$ with tuned and un-tuned $\epsilon_i$ under \emph{Uniform} selection of $\left\{E_i\right\}_{i=1}^n$.}
    \label{fig:optimize_epsilon}
\end{figure}

\section{CONCLUSION}
We proposed a distributed asynchronous primal-dual method tailored for heterogeneous multi-agent systems (in terms of variability of local work). 
The algorithm allows for a variable number of participating agents and requires a single broadcast of the local vector. 
Linear convergence was established for arbitrary degree of heterogeneity (\textit{Theorem}~\ref{thm:main}). 
Additionally, a simple method of personalizing hyperparameters locally (\textit{Corollary}~\ref{lem:tune_epsilon}) was shown to accelerate the algorithm both theoretically and experimentally.



\bibliographystyle{IEEEbib}
\bibliography{refs}

\begin{thebibliography}{10}

\bibitem{YANG2019278}
T.~Yang, X.~Yi, J.~Wu, Y.~Yuan, D.~Wu, Z.~Meng, Y.~Hong, H.~Wang, Z.~Lin, and
  K.~Johansson,
\newblock ``A survey of distributed optimization,''
\newblock {\em Annual Reviews in Control}, vol. 47, pp. 278--305, 2019.

\bibitem{li2022HIPPO}
Y.~Li, P.~Voulgaris, and N.~Freris,
\newblock ``Distributed primal-dual optimization for heterogeneous multi-agent
  systems,''
\newblock in {\em IEEE Conference on Decision and Control (CDC)}, 2022, pp.
  5870--5875.

\bibitem{vlachos2015compressive}
M.~Vlachos, N.~Freris, and A.~Kyrillidis,
\newblock ``Compressive mining: fast and optimal data mining in the compressed
  domain,''
\newblock {\em International Journal on Very Large Data Bases}, vol. 24, no. 1,
  pp. 1--24, 2015.

\bibitem{sopasakis2016accelerated}
P.~Sopasakis, N.~Freris, and P.~Patrinos,
\newblock ``Accelerated reconstruction of a compressively sampled data
  stream,''
\newblock in {\em IEEE European Signal Processing Conference (EUSIPCO)}, 2016,
  pp. 1078--1082.

\bibitem{FedZO}
W.~Fang, Z.~Yu, Y.~Jiang, Y.~Shi, C.~Jones, and Y.~Zhou,
\newblock ``Communication-efficient stochastic zeroth-order optimization for
  federated learning,''
\newblock {\em IEEE Transactions on Signal Processing}, vol. 70, pp.
  5058--5073, 2022.

\bibitem{freris2010fundamentals}
N.~Freris, H.~Kowshik, and P.~R. Kumar,
\newblock ``Fundamentals of large sensor networks: Connectivity, capacity,
  clocks, and computation,''
\newblock {\em Proceedings of the IEEE}, vol. 98, no. 11, pp. 1828--1846, 2010.

\bibitem{Conte2012Computational}
C.~Conte, T.~Summers, M.~Zeilinger, M.~Morari, and C.~Jones,
\newblock ``Computational aspects of distributed optimization in model
  predictive control,''
\newblock in {\em IEEE Conference on Decision and Control (CDC)}, 2012, pp.
  6819--6824.

\bibitem{nedic2009distributed}
A.~Nedi{\'c} and A.~Ozdaglar,
\newblock ``Distributed subgradient methods for multi-agent optimization,''
\newblock {\em IEEE Transactions on Automatic Control}, vol. 54, no. 1, pp.
  48--61, 2009.

\bibitem{shi2015extra}
W.~Shi, Q.~Ling, G.~Wu, and W.~Yin,
\newblock ``{EXTRA}: An exact first-order algorithm for decentralized consensus
  optimization,''
\newblock {\em SIAM Journal on Optimization}, vol. 25, no. 2, pp. 944--966,
  2015.

\bibitem{shi2015proximal}
W.~Shi, Q.~Ling, G.~Wu, and W.~Yin,
\newblock ``A proximal gradient algorithm for decentralized composite
  optimization,''
\newblock {\em IEEE Transactions on Signal Processing}, vol. 63, no. 22, pp.
  6013--6023, 2015.

\bibitem{qu2017harnessing}
G.~Qu and N.~Li,
\newblock ``Harnessing smoothness to accelerate distributed optimization,''
\newblock {\em IEEE Transactions on Control of Network Systems}, vol. 5, no. 3,
  pp. 1245--1260, 2017.

\bibitem{nedic2017achieving}
A.~Nedi{\'c}, A.~Olshevsky, and W.~Shi,
\newblock ``Achieving geometric convergence for distributed optimization over
  time-varying graphs,''
\newblock {\em SIAM Journal on Optimization}, vol. 27, no. 4, pp. 2597--2633,
  2017.

\bibitem{alghunaim2019P2D2}
S.~Alghunaim, K.~Yuan, and A.~Sayed,
\newblock ``A linearly convergent proximal gradient algorithm for decentralized
  optimization,''
\newblock {\em Advances in Neural Information Processing Systems}, 2019.

\bibitem{wei2013distributed}
E.~Wei, A.~Ozdaglar, and A.~Jadbabaie,
\newblock ``A distributed {N}ewton method for network utility maximization--i:
  Algorithm,''
\newblock {\em IEEE Transactions on Automatic Control}, vol. 58, no. 9, pp.
  2162--2175, 2013.

\bibitem{mokhtari2016network}
A.~Mokhtari, Q.~Ling, and A.~Ribeiro,
\newblock ``Network {N}ewton distributed optimization methods,''
\newblock {\em IEEE Transactions on Signal Processing}, vol. 65, no. 1, pp.
  146--161, 2016.

\bibitem{crane2019dingo}
R.~Crane and F.~Roosta,
\newblock ``{DINGO}: Distributed {N}ewton-type method for gradient-norm
  optimization,''
\newblock {\em Advances in Neural Information Processing Systems}, vol. 32,
  2019.

\bibitem{boyd2011distributed}
S.~Boyd, N.~Parikh, E.~Chu, B.~Peleato, and J.~Eckstein,
\newblock ``Distributed optimization and statistical learning via the
  alternating direction method of multipliers,''
\newblock {\em Foundations and Trends{\textregistered} in Machine Learning},
  vol. 3, no. 1, pp. 1--122, 2011.

\bibitem{wei2012DADMM}
E.~Wei and A.~Ozdaglar,
\newblock ``Distributed alternating direction method of multipliers,''
\newblock in {\em IEEE Conference on Decision and Control (CDC)}, 2012, pp.
  5445--5450.

\bibitem{shi2014linear}
W.~Shi, Q.~Ling, K.~Yuan, G.~Wu, and W.~Yin,
\newblock ``On the linear convergence of the {ADMM} in decentralized consensus
  optimization,''
\newblock {\em IEEE Transactions on Signal Processing}, vol. 62, no. 7, pp.
  1750--1761, 2014.

\bibitem{ling2015dlm}
Q.~Ling, W.~Shi, G.~Wu, and A.~Ribeiro,
\newblock ``{DLM}: Decentralized linearized alternating direction method of
  multipliers,''
\newblock {\em IEEE Transactions on Signal Processing}, vol. 63, no. 15, pp.
  4051--4064, 2015.

\bibitem{chang2014multi}
T.~Chang, M.~Hong, and X.~Wang,
\newblock ``Multi-agent distributed optimization via inexact consensus
  {ADMM},''
\newblock {\em IEEE Transactions on Signal Processing}, vol. 63, no. 2, pp.
  482--497, 2014.

\bibitem{Chang2016asyn}
T.~Chang, M.~Hong, W.~Liao, and X.~Wang,
\newblock ``Asynchronous distributed {ADMM} for large-scale
  optimization—{P}art {I}: {A}lgorithm and convergence analysis,''
\newblock {\em IEEE Transactions on Signal Processing}, vol. 64, no. 12, pp.
  3118--3130, 2016.

\bibitem{Mokhtari2016DADMM}
A.~Mokhtari, W.~Shi, Q.~Ling, and A.~Ribeiro,
\newblock ``A decentralized second-order method with exact linear convergence
  rate for consensus optimization,''
\newblock {\em IEEE Transactions on Signal and Information Processing over
  Networks}, vol. 2, no. 4, pp. 507--522, 2016.

\bibitem{Eisen2019PD}
M.~Eisen, A.~Mokhtari, and A.~Ribeiro,
\newblock ``A primal-dual quasi-{N}ewton method for exact consensus
  optimization,''
\newblock {\em IEEE Transactions on Signal Processing}, vol. 67, no. 23, pp.
  5983--5997, 2019.

\bibitem{li2022DRUID}
Y.~Li, P.~Voulgaris, D.~Stipanovic, and N.~Freris,
\newblock ``Communication efficient curvature aided primal-dual algorithms for
  decentralized optimization,''
\newblock {\em IEEE Transactions on Automatic Control}, 2023.

\bibitem{Freris2011Asy}
N.~Freris, S.~Graham, and P.~R. Kumar,
\newblock ``Fundamental limits on synchronizing clocks over networks,''
\newblock {\em IEEE Transactions on Automatic Control}, vol. 56, no. 6, pp.
  1352--1364, 2011.

\bibitem{roosta2019sub}
F.~Roosta-Khorasani and M.~Mahoney,
\newblock ``Sub-sampled {N}ewton methods,''
\newblock {\em Mathematical Programming}, vol. 174, no. 1, pp. 293--326, 2019.

\bibitem{johnson2013SVRG}
R.~Johnson and T.~Zhang,
\newblock ``Accelerating stochastic gradient descent using predictive variance
  reduction,''
\newblock {\em Advances in Neural Information Processing Systems}, vol. 26,
  2013.

\bibitem{Agarwal2017LiSSA}
N.~Agarwal, B.~Bullins, and E.~Hazan,
\newblock ``Second-order stochastic optimization for machine learning in linear
  time,''
\newblock {\em Journal of Machine Learning Research}, vol. 18, no. 116, pp.
  1--40, 2017.

\bibitem{parikh2014proximal}
N.~Parikh and S.~Boyd,
\newblock ``Proximal algorithms,''
\newblock {\em Foundations and Trends{\textregistered} in Optimization}, vol.
  1, no. 3, pp. 127--239, 2014.

\bibitem{nesterov2018lectures}
Y.~Nesterov,
\newblock {\em Lectures on convex optimization}, vol. 137,
\newblock Springer, 2018.

\end{thebibliography}

\clearpage
\appendix
\begin{proof}[Proof of \textbf{\textit{Lemma}} \ref{lem:centralizeUpdates}]
    The decomposition of $\y^t = [\balpha^t;-\balpha^t]$ and the property that $\z^t = \frac{1}{2}E_u \x^t$ follow the same lines as in the proof of \cite[\textit{Lemma} 1]{li2022DRUID}.
    \eqref{eqn:matrix_x} is obtained by using multiple stochastic Newton steps for \eqref{eqn:ADMM_x} and invoking $\z^t = \frac{1}{2}E_u \x^t$ (which eliminates \eqref{eqn:ADMM_z}) and the property that $\y^t = [\balpha^t;-\balpha^t]$;
    by completion of squares, \eqref{eqn:ADMM_theta} can be converted to \eqref{eqn:matrix_theta} under \textit{Assumption} \ref{assump:convexity_continuous}.
    \eqref{eqn:matrix_phi} is equivalent to \eqref{eqn:ADMM_y}, achieved by inspecting the symmetric structure in $\y$ and the proper initialization $\balpha^0 + \bbeta^0 = 0$;
    finally, \eqref{eqn:matrix_lambda} is equivalent to \eqref{eqn:ADMM_lambda}.
\end{proof}
\begin{proof}[Proof of \textbf{\textit{Lemma}} \ref{lem:NewtonConverge}]
    It follows from the analysis of stochastic Newton method \cite[\textit{Theorem} 11]{roosta2019sub} that for sufficiently large batch sizes $|b_g|$ and $|b_H|$, there exist $c_i \in (0,1)$ such that for $i \in [n]$:
    \begin{equation*}\label{eqn:local_sto_Newton}
        \E\left[\|\x^{t,E_i}_i-\x^{t,\star}_i\|^2\right] \le c_i^{E_i}\|\x^{t,0}_i - \x^{t,\star}_i\|^2,
    \end{equation*}
    whence \eqref{eqn:NewtonConverge} follows from the definition of $C$.
\end{proof}
\begin{proof}[Proof of \textbf{\textit{Lemma}} \ref{lem:error}]
    The gradient of the perturbed AL in \eqref{eqn:perturbed_AL} at global round $t$ is given by:
    \begin{align}
        &\nabla \mathcal{L}_{\Gamma}(\x, \btheta^t, \z^t; \y^t, \blambda^t) := \nabla f(\x) + A^{\top}\y^{t} + S\blambda^{t} \label{eqn:grad_epsAL} \\
        &+ \mu_{z}A^{\top}\left(A\x - B\z^{t}\right) + \mu_{\theta}S(S^{\top}\x - \btheta^t) + \Gamma(\x - \x^t). \notag
    \end{align}
    Following from \textit{Lemma} \ref{lem:centralizeUpdates}, we obtain: 
    \begin{equation*}
        \begin{aligned}
            S\blambda^{t} + \mu_{\theta}S(S^{\top}\x^{t+1} - \btheta^t) &= S\blambda^{t+1} + \mu_{\theta}S(\btheta^{t+1} - \btheta^t),\\
            A^{\top}\y^{t} + \mu_{z}A^{\top}\left(A\x^{t+1} - B\z^{t}\right) &= \bphi^t + \frac{\mu_z}{2}\left(2D\x^{t+1} - L_u \x^t\right),\\
            \bphi^t + \frac{\mu_z}{2}\left(2D\x^{t+1} - L_u \x^t\right) &= \bphi^{t+1} + \frac{\mu_z}{2} L_u\left(\x^{t+1} - \x^t\right),\\
            \frac{\mu_z}{2} L_u\left(\x^{t+1} - \x^t\right) &= \mu_z E_u^{\top}(\z^{t+1} - \z^t),
        \end{aligned}
    \end{equation*}
    in specific, the first equality follows from \eqref{eqn:matrix_lambda} and $S^{\top}\x^{t+1} = \btheta^{t+1}$;
    the second from \eqref{eqn:ADMM_y}, \eqref{eqn:matrix_phi}, and $\bphi^t = E_s^{\top} \balpha^t$;
    the third from $D = \frac{1}{2}(L_u + L_s)$;
    finally, the last from $\z^t = \frac{1}{2}E_u \x^t$.
    Substituting them into \eqref{eqn:grad_epsAL} with $\x = \x^{t+1}$, we obtain \eqref{eqn:residual}.
\end{proof}
\begin{proof}[Proof of \textbf{\textit{Lemma}} \ref{lem:multiple}]
The Lipschitz continuity of $\nabla \mathcal{L}_{\Gamma}(\x, \btheta^t, \z^t; \y^t, \blambda^t)$ with parameter $M$ as in \eqref{eqn:Lipschitz_perturbAL} is a direct consequence of \textit{Assumption} \ref{assump:convexity_continuous} and the
definition in \eqref{eqn:perturbed_AL}.
It then follows from \textit{Lemma} \ref{lem:error} that
\begin{equation}\label{eqn:multiple_error_bound}
\begin{aligned}
    \|\e^t\|^2 &=\|\nabla \mathcal{L}_{\Gamma}(\x^{t+1}, \btheta^t, \z^t; \y^t, \blambda^t)\|^2\\
    &\le M^2 \|\x^{t+1}-\x^{t,\star}\|^2,
\end{aligned}
\end{equation}
From the inequality $\|a+b\|^2 \le (1+\xi^{-1})\|a\|^2 + (1+\xi)\|b\|^2$ for any $\xi>0$, we obtain for any $i\in[n]$
\begin{equation}\label{eqn:multiple_intermediate_inequa}
    \begin{aligned}
        \|\x^{t}_i-\x^{t,\star}_i\|^2 &\le (1+\xi_i^{-1})\|\x^{t}_i-\x^{t+1}_i\|^2\\
        &+ (1+\xi_i)\|\x^{t+1}_i-\x^{t,\star}_i\|^2.
    \end{aligned}
\end{equation}
Taking expectation on both sides of \eqref{eqn:multiple_intermediate_inequa} and invoking \textit{Lemma} \ref{lem:NewtonConverge} (recall $\x^{t+1}_i := \x^{t,E_i}_i$), we obtain
\begin{equation}\label{eqn:multiple_final_inequa}
    \begin{aligned}
        \E\left[\|\x^{t+1}_i-\x^{t,\star}_i\|^2\right] &\le c_i^{E_i}\E\left[\|\x^{t}_i-\x^{t,\star}_i\|^2\right]\\ 
        &\le (1+\xi_i^{-1})c_i^{E_i}\E\left[\|\x^{t+1}_i-\x^{t}_i\|^2\right] \\
        &+(1+\xi_i)c_i^{E_i}\E\left[\|\x^{t+1}_i-\x^{t,\star}_i\|^2\right].
    \end{aligned}
\end{equation}
Rearranging the terms in \eqref{eqn:multiple_final_inequa}, using \eqref{eqn:multiple_error_bound}, and combining the inequalities for each $i\in[n]$ into a single quadratic form gives the desired.
\end{proof}
\begin{proof}[Proof of \textbf{\textit{Theorem}} \ref{thm:main}]
We first analyze the synchronous method, i.e., $p_{\min}=1$.
Combining \eqref{eqn:expand} and KKT condition $\nabla f(\x^{\star}) + E_s^{\top}\balpha^{\star} + S\blambda^{\star} = 0$ gives:
\begin{equation}\label{eqn:originDEF}
    \begin{aligned}
    \nabla f(\x^{t+1})-\nabla f(\x^\star) = -\big(E_s^\top (\balpha^{t+1}-\balpha^\star)+\Gamma(\x^{t+1}-\x^t)\\
    +S(\blambda^{t+1}-\blambda^\star+\mu_{\theta} (\btheta^{t+1}-\btheta^t))-\e^t+\mu_{z} E_u^\top(\z^{t+1}-\z^t)).
    \end{aligned}
\end{equation}
Under \textit{Assumption} \ref{assump:convexity_continuous}, the following inequality holds \cite[\textit{Theorem} 2.1.12]{nesterov2018lectures}:
\begin{equation}\label{eqn:main_1}
    \begin{aligned}
        &\tfrac{m_fM_f}{m_f+M_f}\|\x^{t+1}-\x^\star\|^2+\tfrac{1}{m_f+M_f}\|\nabla f(\x^{t+1})-\nabla f(\x^\star)\|^2 \\
        &\le (\x^{t+1}-\x^\star)^\top (\nabla f(\x^{t+1})-\nabla f(\x^\star)) \\
        &= (\x^{t+1}-\x^\star)^\top \e^t-(\x^{t+1}-\x^\star)^\top\Gamma(\x^{t+1}-\x^t)\\
        &-(\x^{t+1}-\x^\star)^{\top} E_s^{\top} (\balpha^{t+1}-\balpha^\star)\\
        &-(\x^{t+1}-\x^\star)^\top S \Big(\blambda^{t+1}-\blambda^\star +\mu_{\theta}(\btheta^{t+1}-\btheta^t)\Big)\\
        &-\mu_{z}(\x^{t+1}-\x^\star)^\top E_u^\top (\z^{t+1}-\z^t). 
    \end{aligned}
\end{equation}
The following equalities follow from \textit{Lemma} \ref{lem:centralizeUpdates}.
In specific, the first is from \eqref{eqn:matrix_phi} and KKT condition $E_s \x^{\star} = 0$;
the second is from \eqref{eqn:matrix_theta} and KKT condition $S^{\top} \x^{\star} = \btheta^{\star}$;
the third follows from $\z^t = \frac{1}{2}E_u \x^t$.
The following holds
\begin{align*}
    (\x^{t+1}-\x^\star)^{\top} E_s^{\top} &= \frac{2}{\mu_z}(\balpha^{t+1}-\balpha^{t}), \\
    (\x^{t+1}-\x^\star)^\top S &= (\btheta^{t+1}-\btheta^{\star})^{\top} + \frac{1}{\mu_{\theta}}(\blambda^{t+1}-\blambda^{t})^{\top}, \\
    (\x^{t+1}-\x^\star)^\top E_u^\top &= 2(\z^{t+1}-\z^{\star})^{\top}.
\end{align*}
Using the identity $-2(a-b)^{\top}\mathcal{K}(a-c)=\|b-c\|^2_{\mathcal{K}}-\|a-b\|^2_{\mathcal{K}}-\|a-c\|^2_{\mathcal{K}}$ for $\mathcal{K} \succ 0$ to \eqref{eqn:main_1} (for $\mathcal{K}=\Gamma$ or $I$), we obtain
\begin{align}
    &\tfrac{2m_fM_f}{m_f+M_f}\|\x^{t+1}-\x^\star\|^2+\tfrac{2}{m_f+M_f}\|\nabla f(\x^{t+1})-\nabla f(\x^\star)\|^2 \notag\\
    &\leq \big(\|\x^t-\x^\star\|^2_{\Gamma}-\|\x^{t+1}-\x^\star\|^2_{\Gamma}-\|\x^{t+1}-\x^t\|^2_{\Gamma}\big) \notag\\
    & +2\mu_{z}\big(\|\z^t-\z^\star\|^2-\|\z^{t+1}-\z^\star\|^2-\|\z^{t+1}-\z^t\|^2\big) \notag\\
    & +\frac{1}{\mu_\theta}\big(\|\blambda^t-\blambda^\star\|^2-\|\blambda^{t+1}-\blambda^\star\|^2-\|\blambda^{t+1}-\blambda^t\|^2\big) \notag\\
    & +\mu_{\theta}\big(\|\btheta^t-\btheta^\star\|^2-\|\btheta^{t+1}-\btheta^\star\|^2-\|\btheta^{t+1}-\btheta^t\|^2\big) \label{eqn:bound_various_terms}\\
    & +\frac{2}{\mu_{z}}\big(\|\balpha^{t}-\balpha^\star\|^2-\|\balpha^{t+1}-\balpha^\star\|^2-\|\balpha^{t+1}-\balpha^t\|^2\big) \notag\\
    & +2(\x^{t+1}-\x^\star)^\top \e^t \notag\\
    &=\|\v^t-\v^{\star}\|^2_{\mathcal{H}}-\|\v^{t+1}-\v^{\star}\|^2_\mathcal{H}-\|\v^{t+1}-\v^t\|^2_{\mathcal{H}} \notag\\
    & +2(\x^{t+1}-\x^\star)^\top \e^t. \notag
\end{align}
Establishing linear convergence boils down to showing that 
\begin{equation}\label{eqn:thm_general_inequa}
    \eta \|\v^{t+1}-\v^{\star}\|^2_{\mathcal{H}} \leq \|\v^t-\v^{\star}\|^2_{\mathcal{H}}-\|\v^{t+1}-\v^{\star}\|^2_{\mathcal{H}}
\end{equation}
holds for some $\eta>0$.
From the definitions of $\mathcal{H}$ and $\v$ in \eqref{eqn:def_H} and \eqref{eqn:def_v}, the LHS of \eqref{eqn:thm_general_inequa} can be expanded to 
\begin{equation}\label{eqn:first_term_expansion}
    \begin{aligned}
        &\eta \|\v^{t+1}-\v^{\star}\|^2_{\mathcal{H}} = \eta \Big(\|\x^{t+1}-\x^\star\|^2_{\Gamma} + 2\mu_{z}\|\z^{t+1}-\z^\star\|^2\\
        & + \mu_{\theta}\|\btheta^{t+1}-\btheta^\star\|^2 + \tfrac{2}{\mu_{z}} \|\balpha^{t+1}-\balpha^\star\|^2 +  \tfrac{1}{\mu_{\theta}}\|\blambda^{t+1}-\blambda^\star\|^2\Big).
    \end{aligned}
\end{equation}
In the following, we will give an upper bound for \eqref{eqn:first_term_expansion}.
First, we show an intermediate inequality:
\begin{equation}\label{eqn:bound_alpha_lambda}
    \begin{aligned}
        &\|\balpha^{t+1}-\balpha^\star\|^2 + \|\blambda^{t+1}-\blambda^\star\|^2 \\
        &\le \frac{1}{\sigma^+_{\min}}\|E_s^{\top}(\balpha^{t+1}-\balpha^\star) + S(\blambda^{t+1}-\blambda^\star)\|^2 \\
        &\le \frac{5}{\sigma^+_{\min}} \Big(\|\nabla f(\x^{t+1}) - \nabla f(\x^{\star})\|^2 + \|\x^{t+1}-\x^t\|^2_{\Gamma^2}\\
        &+\mu_{\theta}^2\|\btheta^{t+1} -\btheta^{t}\|^2+\|\e^{t}\|^2+\sigma^{L_u}_{\max}\mu_{z}^2\|\z^{t+1}-\z^t\|^2\Big),
    \end{aligned}
\end{equation}
where the first inequality follows from \cite[\textit{Lemma} 5]{li2022DRUID} and the second follows from \eqref{eqn:originDEF} as well as $(\sum_{i=1}^n a_i)^2 \le \sum_{i=1}^n na_i^2$.
By selecting $\mu_z = 2 \mu_{\theta}$, we obtain from \eqref{eqn:bound_alpha_lambda} that
\begin{equation}\label{eqn:upper_bound_first2}
    \begin{aligned}
        &\frac{2}{\mu_{z}} \|\balpha^{t+1}-\balpha^\star\|^2 + \frac{1}{\mu_{\theta}}\|\blambda^{t+1}-\blambda^\star\|^2\\
        &=\frac{1}{\mu_{\theta}} \big(\|\balpha^{t+1}-\balpha^\star\|^2 + \|\blambda^{t+1}-\blambda^\star\|^2 \big)\\
        &\le \frac{5}{\mu_{\theta} \sigma^+_{\min}} \Big(\|\nabla f(\x^{t+1}) - \nabla f(\x^{\star})\|^2 + \|\x^{t+1}-\x^t\|^2_{\Gamma^2}\\
        &+\mu_{\theta}^2\|\btheta^{t+1} -\btheta^{t}\|^2+\|\e^{t}\|^2+\sigma^{L_u}_{\max}\mu_{z}^2\|\z^{t+1}-\z^t\|^2\Big),
    \end{aligned}
\end{equation}
which provides the upper bound for the last two terms on the RHS of \eqref{eqn:first_term_expansion}.
From \textit{Lemma} \ref{lem:centralizeUpdates}, the remaining terms in \eqref{eqn:first_term_expansion} can be upper bounded as:
\begin{equation}\label{eqn:upper_bound_second2}
    \begin{aligned}
        &2\mu_{z}\|\z^{t+1}-\z^\star\|^2 + \mu_{\theta}\|\btheta^{t+1}-\btheta^\star\|^2 \le\\
        &\tfrac{\mu_z \sigma^{L_u}_{\max}}{2}\|\x^{t+1}-\x^\star\|^2+2\mu_{\theta}\|\x^{t+1}-\x^\star\|^2 + \tfrac{2}{\mu_{\theta}}\|\blambda^{t+1}-\blambda^{t}\|^2,
    \end{aligned}
\end{equation}
where the first term on the RHS follows from $\z^t = \frac{1}{2}E_u \x^t$, and the second and third term are derived from \eqref{eqn:matrix_theta}. 
By adding \eqref{eqn:upper_bound_first2} and \eqref{eqn:upper_bound_second2} and using the definition of $\mathcal{H}$, we obtain an upper bound for $\eta \|\v^{t+1}-\v^{\star}\|^2_{\mathcal{H}}$ (the LHS of \eqref{eqn:thm_general_inequa}).
The lower bound for $\|\v^t-\v^{\star}\|^2_{\mathcal{H}}-\|\v^{t+1}-\v^{\star}\|^2_{\mathcal{H}}$ (the RHS of \eqref{eqn:thm_general_inequa}) can be derived from \eqref{eqn:bound_various_terms} as:
    \begin{align}
        &\|\v^t-\v^{\star}\|^2_{\mathcal{H}}-\|\v^{t+1}-\v^{\star}\|^2_{\mathcal{H}} \ge \notag \\
        &\tfrac{2m_fM_f}{m_f+M_f}\|\x^{t+1}-\x^\star\|^2+\tfrac{2}{m_f+M_f}\|\nabla f(\x^{t+1})-\nabla f(\x^\star)\|^2 \notag\\
        & -2(\x^{t+1}-\x^\star)^\top \e^t + \|\v^{t+1}-\v^t\|^2_{\mathcal{H}}. \label{eqn:lower_bound_following_term}
    \end{align}
By applying $ -2(\x^{t+1}-\x^\star)^\top \e^t \ge -\zeta\|\e^t\|^2 - \frac{1}{\zeta}\|\x^{t+1}-\x^\star\|^2$ (for any $\zeta > 0$) to \eqref{eqn:lower_bound_following_term}, and putting the upper bound (sum of \eqref{eqn:upper_bound_first2} and \eqref{eqn:upper_bound_second2}) and the lower bound \eqref{eqn:lower_bound_following_term} of \eqref{eqn:thm_general_inequa} together, it suffices to show that the following holds to establish \eqref{eqn:thm_general_inequa}:
\begin{align}
    &\eta \left\{\tfrac{5}{\mu_{\theta} \sigma^{+}_{\text{min}}}\Big( \E\left[\|\nabla f(\x^{t+1}) - \nabla f (\x^{\star})\|^2\right] + \E\left[ \|\x^{t+1} - \x^t\|^2_{\Gamma^2}\right]  \right. \notag \\ 
    & +\mu_{\theta}^2\E\left[\|\btheta^{t+1}-\btheta^t\|^2\right]+ \sigma^{L_u}_{\text{max}}\mu_{z}^2\E\left[\|\z^{t+1}-\z^t\|^2\right]\Big)\notag \\
    &+\tfrac{2}{\mu_{\theta}}\E\left[\|\blambda^{t+1} - \blambda^t\|^2\right] + \left(\tfrac{5}{\mu_{\theta} \sigma^{+}_{\text{min}}} +\tfrac{\zeta}{\eta}\right) \E\left[\|\e^t\|^2\right]\notag\\
    &\left.+ \left(2\mu_{\theta}+ \tfrac{\mu_{z} \sigma^{L_u}_{\text{max}}}{2}\right)\E\left[\|\x^{t+1} - \x^{\star}\|^2\right] + \E\left[\|\x^{t+1} - \x^{\star}\|^2_{\Gamma}\right]\right\} \notag \\
    &\le  \tfrac{2}{m_f+M_f} \E\left[\|\nabla f(\x^{t+1}) - \nabla f (\x^{\star})\|^2\right] + \E\left[\|\x^{t+1} - \x^{t}\|^2_{\Gamma}\right] \notag\\
    &+ \mu_{\theta} \E\left[\|\btheta^{t+1}-\btheta^t\|^2\right] + 2\mu_{z} \E\left[\|\z^{t+1}-\z^t\|^2\right] \notag\\
    & + \tfrac{1}{\mu_{\theta}}\E\left[\|\blambda^{t+1} - \blambda^t\|^2\right] + \tfrac{2}{\mu_{z}}\E\left[\|\balpha^{t+1} - \balpha^t\|^2\right] \notag\\
   & + \left(\tfrac{2m_f M_f}{m_f+M_f} - \tfrac{1}{\zeta}\right)\E\left[\|\x^{t+1} - \x^{\star}\|^2\right], \label{eqn:theorem_equation}
\end{align}
where we have applied $\E$ on both sides to account for the stochasticity of mini-batch selection.
An upper bound of $\E[\|\e^t\|^2]$ is obtained from \eqref{eqn:error_bound}.
We conclude by matching corresponding norm terms on both sides, we can see that the selection of $\eta$ in \eqref{eqn:eta_selection} guarantees that \eqref{eqn:theorem_equation} is satisfied. 
The proof of the asynchronous case follows the same line of analysis as in \cite[\textit{Theorem} 3]{li2022DRUID}, and is omitted due to length limitations.
\end{proof}
\begin{proof}[Proof of \textbf{\textit{Corollary}} \ref{lem:tune_epsilon}]
Observe that the dependency on $E_i$ in \eqref{eqn:eta_selection} is solely captured in the first term inside the \textit{min}, namely
\begin{equation}\label{eqn:eta_selection_first_term}
    \min_{i \in [n]}\frac{\mu_{\theta} \sigma^{+}_{\min}(\epsilon_i - \zeta\tau_i(E_i))}{5(\tau_i(E_i)+\epsilon_i^2)}.
\end{equation}
Since the rate is decreasing with $\eta$, we choose $\epsilon_i$ to maximize each term in \eqref{eqn:eta_selection_first_term} (this can be done for each agent locally), which admits an explicit solution as in \eqref{eqn:epsilon_selection}. 
Substituting in \eqref{eqn:eta_selection} gives \eqref{eqn:new_eta_selection}.
\end{proof}
\end{document}